\newcommand{\invG}{\mathsf{G}_{\mathsf{inv}}}
\newcommand{\biG}{\mathsf{biG}}
\newcommand{\KinvG}{\mathbf{K}\mathsf{G}_{\mathsf{inv}}}
\newcommand{\coimplies}{\Yleft}
\newcommand{\KGsquare}{\mathbf{K}\mathsf{G}^2}
\newcommand{\KG}{\mathbf{K}\mathsf{G}}
\newcommand{\KbiG}{\mathbf{K}\mathsf{biG}}
\newcommand{\pspace}{\mathsf{PSPACE}}
\newcommand{\Prop}{\mathtt{Prop}}
\newcommand{\Lit}{\mathtt{Lit}}
\newcommand{\invol}{{\sim}}
\newcommand{\bimodalLinv}{\mathscr{L}^{\invol}_{\Box,\lozenge}}
\newcommand{\Fmsf}{\mathsf{F}}
\newcommand{\Gmsf}{\mathsf{G}}
\newcommand{\Rmsf}{\mathsf{R}}
\newcommand{\Tmsf}{\mathsf{T}}
\newcommand{\Bmc}{\mathcal{B}}
\newcommand{\Imc}{\mathcal{I}}
\newcommand{\Omc}{{\mathcal{O}}}
\newcommand{\Pmc}{{\mathcal{P}}}
\newcommand{\cmc}{\mathcal{c}}
\newcommand{\hmc}{\mathcal{h}}
\newcommand{\lmc}{\mathcal{l}}
\newcommand{\tmc}{\mathcal{t}}
\newcommand{\Ffrak}{\mathfrak{F}}
\newcommand{\Mfrak}{{\mathfrak{M}}}
\newcommand{\Nfrak}{{\mathfrak{N}}}
\newcommand{\tmbf}{\mathbf{t}}
\newcommand{\Nmbb}{\mathbb{N}}
\newcommand{\Tmbb}{\mathbb{T}}
\newcommand{\nmbb}{\mathbb{n}}
\newcommand{\TKinvG}{\mathcal{T}\!(\KinvG)}
\newcommand{\real}{\mathsf{rl}}
\newcommand{\Var}{\mathsf{Var}}
\newcommand{\Str}{\mathsf{Str}}
\newcommand{\valueterm}{\mathsf{VT}}
\newcommand{\LF}{\mathsf{LF}}
\newcommand{\relterm}{\mathsf{RT}}
\newcommand{\zero}{\mathbb{0}}
\newcommand{\one}{\mathbb{1}}
\newcommand{\onetop}{\mathbf{1}}
\newcommand{\zerobot}{\mathbf{0}}
\newcommand{\WorldLabels}{\mathsf{WL}}
\newtheorem{lemma}{Lemma}
\theoremstyle{definition}
\newtheorem{definition}{Definition}
\theoremstyle{remark}
\newtheorem{example}{Example}
\newtheorem{convention}{Convention}
\title{Modal Logic for Reasoning About Uncertainty and Confusion}
\author{Marta B\'ilkov\'a\thanks{The first author is supported by the project Logical Structure of Information Channels, no.\ 21-23610M of the Czech Science Foundation. We are also grateful to the reviewers for their suggestions that helped to improve the paper.}
\institute{The Czech Academy of Sciences, Institute of Philosophy, Prague, Czech Republic}
\email{bilkova@cs.cas.cz}
\and
Thomas M.\ Ferguson%\thanks{Thomas' grant(s).}
\institute{Department of Cognitive Science, Rensselaer Polytechnic Institute, Troy, USA}
\email{tferguson@gradcenter.cuny.edu}
\and
Daniil Kozhemiachenko
\institute{Aix Marseille Univ, CNRS, LIS, Marseille, France}
\email{daniil.kozhemiachenko@lis-lab.fr}
}
\begin{document}
\allowdisplaybreaks
\maketitle
\begin{abstract}
We consider a~modal logic that can formalise statements about uncertainty and beliefs such as \emph{I think that my wallet is in the drawer rather than elsewhere} or \emph{I am confused whether my appointment is on Monday or Tuesday}. To do that, we expand G\"{o}del modal lo\-gic $\KG$ with the involutive negation~$\invol$ defined as $v(\invol\phi,w)=1-v(\phi,w)$. We provide semantics with the finite model property for our new logic that we call $\KinvG$ and show its equivalence to the standard semantics over $[0,1]$-valued Kripke models. Namely, we show that $\phi$ is valid in the standard semantics of $\KinvG$ iff it is valid in the new semantics. Using this new semantics, we construct a~constraint tableaux calculus for $\KinvG$ that allows for an explicit extraction of countermodels from complete open branches and then employ the tableaux calculus to obtain the $\pspace$-completeness of the validity in $\KinvG$.
\end{abstract}
\section{Introduction\label{sec:introduction}}
When people evaluate their uncertainty or beliefs, they can do it in two ways: either quantitatively, by assigning a specific number to the value of the belief, e.g., \emph{‘the chance of rain today is $73\%$’} or qualitatively, by comparing degrees of two beliefs without explicitly mentioning their numerical values as in \emph{‘I think that my wallet is more likely to be in the drawer than in the bag’}. Usually, a~person does not assign exact values to their beliefs and thus reasons qualitatively. Qualitative reasoning can be formalised, e.g., in G\"{o}del modal logic $\KG$ or, if one interprets ‘rather’ as ‘strictly more likely’, its expansion with $\triangle$ (Baaz' Delta~\cite{Baaz1996}) or $\coimplies$ (co-implication~\cite{Rauszer1974,Gore2000}) $\KbiG$ (modal bi-G\"{o}del logic).

Comparing degrees of beliefs is possible in $\KbiG$ because the semantics of its propositional fragment~--- bi-G\"{o}del logic ($\biG$) --- is characterised by the G\"{o}del t-norm $\wedge_\Gmsf$ and its residuum $\rightarrow_\Gmsf$ and their duals $\vee_\Gmsf$ and $\coimplies_\Gmsf$ defined on $[0,1]$ by:
\begin{align}\label{equ:Gtnorm}
a\wedge_\Gmsf b&=\min(a,b)&
a\rightarrow_\Gmsf b&=\begin{cases}1&\text{ if }a\leq b\\b&\text{ otherwise}\end{cases}&a\vee_\Gmsf b&=\max(a,b)&a\coimplies_\Gmsf b&=\begin{cases}0&\text{ if }a\leq b\\a&\text{ otherwise}\end{cases}
\end{align}
Propositional G\"{o}del logic is a prominent many-valued logic whose semantics defined on $[0,1]$  is suitable for formalising relative comparisons of degrees of truth. G\"{o}del \emph{modal} logics expand propositional G\"{o}del logic with modalities $\Box$ and $\lozenge$ and are usually interpreted on $[0,1]$-valued Kripke frames. There, propositional variables in states are evaluated with numbers from $[0,1]$, the propositional connectives correspond to G\"{o}del semantics shown in~\eqref{equ:Gtnorm}, and the values of $\Box\phi$ and $\lozenge\phi$ are defined as, respectively, the infimum and supremum of the values of $\phi$ in the accessible states.

Note that it is natural not only to compare degrees of certainty in two different events but also to assert whether the statement is \emph{more likely to be true} or \emph{more likely to be false}. This can be expressed by statements such as \emph{‘my wallet is likely in the drawer’}, meaning \emph{‘I think that my wallet is in the drawer \underline{rather than elsewhere}’}. I.e., the (subjective) likelihood of the wallet being in the drawer is greater than~$\frac{1}{2}$. Furthermore, an agent may be \emph{confused} about a~statement $p$, i.e., not be able to discern $p$ with its negation.

As $\biG$ can express only $0$ and $1$, the first kind of statements cannot be formalised in $\KbiG$. However, $\frac{1}{2}$ is expressible with the involutive negation~$\invol$ defined as $v(\invol\phi,w)=1-v(\phi,w)$: $v(p\leftrightarrow\invol p,w)=1$ iff $v(p,w)=\frac{1}{2}$. Moreover, since $\neg p$ --- the G\"{o}delian negation of $p$ --- is defined as $p\rightarrow_\Gmsf\zerobot$, $p$ and $\neg p$ can never have the same value, and thus, can always be distinguished. I.e., the second kind of statements is not formalisable either.

In this paper, we will present a~modal logic that allows us to express such statements. Let us now provide a~broader context of our work.

\paragraph{G\"{o}del logics with involution}
The modal logic that we consider in our paper expands $\invG$ --- the propositional G\"{o}del logic with involutive negation $\invol$ that was introduced in~\cite{EstevaGodoHajekNavara2000}. Adding $\invol$ greatly enhances the expressivity of the G\"{o}del logic. Not only can $\frac{1}{2}$ be expressed, but also coimplication (pseudo-difference, interpreted as $\phi$ excludes $\chi$) and the Baaz Delta (the $1$-detecting operator) can be defined.
\begin{align*}
\phi\coimplies\chi&\coloneqq\invol(\invol\chi\rightarrow\invol\phi)&\triangle\phi&\coloneqq\onetop\coimplies(\onetop\coimplies\phi)
\end{align*}

We observe briefly that $\invol$ is closer than $\neg$ to the intuitive reading of ‘not’ in contexts involving truth degrees. Indeed, if the truth degree of $p$ (‘Peter is tall’) is $\tfrac{1}{2}$, it is reasonable to assume that the truth degree of $\invol p$ (Peter is not tall) is also $\tfrac{1}{2}$. Similarly, if Peter is shorter than average (i.e., the truth degree of $p$ is smaller than $\tfrac{1}{2}$), one may safely accept that the truth degree of ‘not-$p$’ should be greater than $\tfrac{1}{2}$. On the other hand, the truth degree of $\neg p$ would be $0$ whenever the truth degree of $p$ is positive. Thus, using $\neg p$ to stand for ‘Peter is not tall’ is counterintuitive as it says that ‘Peter is tall is contradictory'.

Note, moreover, that $\invG$ is \emph{paraconsistent} in the following sense: (1) $(p\wedge\invol p)\rightarrow q$ is not valid; (2) $p,\invol p\not\models_{\invG}q$ if entailment is defined via the preservation of order on $[0,1]$. These properties were investigated in~\cite{ErtolaEstevaFlaminioGodoNoguera2015,ConiglioEstevaGispertGodo2021}.

Paraconsistency of the propositional fragment is a~useful property when it comes to the analysis of beliefs. Indeed, from a~classical standpoint, \emph{all contradictions are equivalent} (because they are all universally false). Hence, $\Box(p\wedge\invol p)\leftrightarrow\Box(q\wedge\invol q)$ is valid in every classical modal logic. If one interprets~$\Box$ as a~belief modality, this means that the agent cannot differentiate between contradictions. This, however, is counterintuitive since a~person can (implicitly) believe in one contradiction but not the other or reject one contradiction with more conviction than the other. In paraconsistent logics, however, contradictions are \emph{not} equivalent, and thus, are not indistinguishable under $\Box$.
% Furthermore, $\invG$ is closely connected to $\Gsquare$ that was first discussed in~\cite{Ferguson2014} and independently studied in more detail in~\cite{BilkovaFrittellaKozhemiachenko2021,BilkovaFrittellaKozhemiachenkoMajer2023IJAR}. $\Gsquare$ is an expansion of $\Gmsf$ with a paraconsistent negation $\neg$ whose semantics is defined via two independent valuations $v_1$ (support of truth or positive support) and $v_2$ (support of falsity or negative support). It is shown in~\cite{Ferguson2014} that $\invG$ is obtained from $\Gsquare$ by setting $v_1(p)=1-v_2(p)$.

\paragraph{G\"{o}del modal logics}
Modal expansions of G\"{o}del logics and their applications have been extensively studied. The $\Box$ and $\lozenge$ fragments\footnote{Note that $\Box$ and $\lozenge$ are not interdefinable in G\"{o}del modal logic.} of $\KG$ were axiomatised in~\cite{CaicedoRodriguez2010}. Hypersequent calculi were constructed in~\cite{MetcalfeOlivetti2009,MetcalfeOlivetti2011} and used to obtain the $\pspace$-completeness of both fragments. $\KG$ in the bi-modal language was axiomatised in~\cite{CaicedoRodriguez2015,RodriguezVidal2021} (for fuzzy and crisp\footnote{A frame $\langle W,R\rangle$ is called \emph{fuzzy} if $R:W\times W\rightarrow[0,1]$ and \emph{crisp} if $R:W\times W\rightarrow\{0,1\}$.} frames, respectively). Moreover, it was shown in~\cite{CaicedoMetcalfeRodriguezRogger2013,CaicedoMetcalfeRodriguezRogger2017} that crisp and fuzzy $\KG$ are $\pspace$-complete. 

The applications of G\"{o}del modal logics and its expansions are well researched. In~\cite{RodriguezTuytEstevaGodo2022}, the completeness of $\mathbf{K45}$ and $\mathbf{KD45}$ extensions of $\KG$ with respect to non-normalised and normalised possibilistic frames was established. In~\cite{AguileraDieguezFernandez-DuqueMcLean2022,AguileraDieguezFernandez-DuqueMcLean2022KR}, a~tem\-po\-ral logic expanding the G\"{o}del logic with co-implication was proposed. In addition, paraconsistent expansions of $\KG$ that can be used to reason about contradictory beliefs were proposed and studied in~\cite{BilkovaFrittellaKozhemiachenko2022IJCAR,BilkovaFrittellaKozhemiachenko2023IGPL,BilkovaFrittellaKozhemiachenko2024JLC}.

\paragraph{G\"{o}del description logics}
G\"{o}del description logics (DLs) were proposed in~\cite{BobilloDelgadoGomez-RamiroStraccia2009} to represent graded information in the ontologies and were further studied in~\cite{BobilloDelgadoGomez-RamiroStraccia2012}. Just as the classical description logics are notational variants of the classical logics with the global (universal) modality, G\"{o}del DLs can be considered a notational variant of global G\"{o}del modal logics. Moreover, G\"{o}del DLs are usually equipped with the involutive negation. They differ from \L{}ukasiewicz and Product fuzzy DLs because they are decidable and often have the same complexity as their classical counterparts even in the cases of expressive logics~\cite{BorgwardtDistelPenaloza2014DL,BorgwardtDistelPenaloza2014KR,Borgwardt2014PhD,BorgwardtPenaloza2017}. Note, however, that since infinite-valued G\"{o}del DLs lack the finite model property, their decision procedures do not produce explicit (counter-)models.

\paragraph{Contributions and plan of the paper}
In this paper, we consider $\KinvG$~--- an expansion of $\KG$ with involutive negation and construct a~tableaux calculus that allows for the extraction of finite counter-models from failed proofs and can be used to establish its $\pspace$-completeness. Our contribution is two-fold. First, following~\cite{CaicedoMetcalfeRodriguezRogger2013,CaicedoMetcalfeRodriguezRogger2017}, we present an alternative semantics for $\KinvG$ that possesses the finite model property. In particular, we show that the sets of validities with respect to the standard and the alternative semantics coincide (both in the case of crisp and arbitrary frames). Second, using this semantics, we construct a~tableaux calculus by combining the techniques from~\cite{Haehnle1994} and~\cite{Rogger2016phd} that allows us to read off countermodels from complete open branches. We then use this calculus to establish $\pspace$-completeness of~$\KinvG$.

The remainder of the text is structured as follows. In Section~\ref{sec:KinvG}, we present $\KinvG$ and its standard semantics. We also discuss the formalisation of reasoning about uncertainty and confusion in $\KinvG$. Section~\ref{sec:Fmodels} is dedicated to the alternative semantics with the finite model property. In Section~\ref{sec:tableaux}, we construct a~tableaux calculus for $\KinvG$ and show its soundness and completeness. Then, in Section~\ref{sec:complexity}, we use the tableaux to obtain the $\pspace$-completeness of $\KinvG$. Finally, we summarise our results and provide a~plan for future work in Section~\ref{sec:conclusion}.
\section{Language and standard semantics\label{sec:KinvG}}
Let us present the language and standard semantics of $\KinvG$. We fix a~countable set $\Prop$ of propositional variables and define the language $\bimodalLinv$ using the grammar below.
\begin{align*}
\bimodalLinv\ni\phi&\Coloneqq p\in\Prop\mid\invol\phi\mid(\phi\wedge\phi)\mid(\phi\rightarrow\phi)\mid\Box\phi\mid\lozenge\phi
\end{align*}

The next definitions introduce the notions of frames and $\KinvG$ models.
\begin{definition}[Frames]\label{def:frames}~
\begin{itemize}[noitemsep,topsep=2pt]
\item A \emph{fuzzy frame} is a tuple $\Ffrak=\langle W,R\rangle$ with $W\neq\varnothing$ and $R:W\times W\rightarrow[0,1]$.% (i.e., $R$ is \emph{a~fuzzy relation}).
\item A \emph{crisp frame} is a tuple $\Ffrak=\langle W,R\rangle$ with $W\neq\varnothing$ and $R:W\times W\rightarrow\{0,1\}$.% (i.e., $R$ is \emph{a~crisp relation}).
\end{itemize}
For a~frame $\Ffrak=\langle W,R\rangle$ and $w\in W$, we set $R(w)=\{w':wRw'>0\}$.
\end{definition}
\begin{definition}[Semantics of $\KinvG$]\label{def:KinvG}
A \emph{$\KinvG$-model} is a tuple $\Mfrak=\langle W,R,v\rangle$ with $\langle W,R\rangle$ being a frame and $v:\Prop\times W\rightarrow[0,1]$ (a $\KinvG$-valuation) extended to the complex formulas as follows (cf.~\eqref{equ:Gtnorm} for the definition of $\wedge_\Gmsf$ and $\rightarrow_\Gmsf$).
\begin{align*}
v(\invol\phi,w)&=1\!-\!v(\phi,w)&v(\phi\wedge\chi,w)&=v(\phi,w)\wedge_\Gmsf v(\chi,w)\\
v(\phi\rightarrow\chi,w)&=v(\phi,w)\rightarrow_\Gmsf v(\chi,w)&v(\Box\phi,w)&=\inf\limits_{w'\in W}\{wRw'\!\rightarrow_\Gmsf\!v(\phi,w')\}\\
&&v(\lozenge\phi,w)&=\sup\limits_{w'\in W}\{wRw'\!\wedge_\Gmsf\!v(\phi,w')\}
\end{align*}

We say that $\phi\!\in\!\bimodalLinv$ is \emph{$\KinvG$-valid on a pointed frame $\langle\Ffrak,\!w\rangle$} ($\Ffrak,\!w\!\models_{\KinvG}\!\phi$) iff $v(\phi,w)=1$ for any model $\Mfrak$ on $\Ffrak$. $\phi$ is \emph{$\KinvG$-valid on frame $\Ffrak$} ($\Ffrak\!\models_{\KinvG}\!\phi$) iff $\Ffrak,w\!\models_{\KinvG}\!\phi$ for any $w\in\Ffrak$. Finally, $\phi$ is \emph{$\KinvG$-valid} (or simply \emph{valid}) iff $\Ffrak\models_{\KinvG}\phi$ for every $\Ffrak$.
\end{definition}
\begin{convention}\label{conv:connectives}
Given a formula $\phi$, we use $\lmc(\phi)$ to denote the number of occurrences of symbols in~$\phi$. We will write $\phi\leftrightarrow\chi$ as a shorthand for $(\phi\rightarrow\chi)\wedge(\chi\rightarrow\phi)$ and use the following defined connectives:
\begin{align*}
\onetop&\coloneqq p\rightarrow p&\zerobot&\coloneqq\invol\onetop&\neg\phi&\coloneqq\phi\rightarrow\zerobot\nonumber\\
\phi\vee\chi&\coloneqq\invol(\invol\phi\wedge\invol\chi)&\phi\coimplies\chi&\coloneqq\invol(\invol\chi\rightarrow\invol\phi)&\triangle\phi&\coloneqq\onetop\coimplies(\onetop\coimplies\phi)
\end{align*}
One can see from~\eqref{equ:Gtnorm} and Definition~\ref{def:KinvG} that $v(\phi\leftrightarrow\chi,w)=1$ if $v(\phi,w)=v(\chi,w)$ and $\min(v(\phi,w),v(\chi,w))$, otherwise; $v(\phi\vee\chi,w)\!=\!v(\phi,w)\vee_\Gmsf v(\chi,w)$; $v(\phi\coimplies\chi,w)\!=\!v(\phi,w)\coimplies_\Gmsf v(\chi,w)$; and $v(\triangle\phi,w)\!=\!1$ if $v(\phi,w)\!=\!1$ and $v(\triangle\phi,w)\!=\!0$, otherwise.
% Additionally, we write $\KinvG$ to denote the set of all $\bimodalLinv$-formulas valid over all frames and $\crispKinvG$ to denote the set of all $\bimodalLinv$-formulas valid over all \emph{crisp} frames.
% Moreover, given a model $\Mfrak$, we will write $W_\Mfrak$, $R_\Mfrak$, and $v_\Mfrak$ to designate that $W$ is its the set, $R$ is its accessibility relation, and $v$ is its valuation.
\end{convention}

Let us now look at how we can use $\KinvG$ to formalise the contexts we considered in the previous section. First, we consider the assertion that $\phi$ is more likely to be true than false.
\begin{example}\label{example:wallet}
Consider the following statement from the \nameref{sec:introduction}.
\begin{description}[noitemsep,topsep=2pt]
\item[$\mathsf{wal.}$:] \textit{I~think that my wallet is in the drawer rather than elsewhere.}
\end{description}
We write $d$ for ‘my wallet is in the drawer’ and use $\Box$ for ‘I think that’. Now, we need to formalise that the degree of subjective likelihood of $d$ is greater than~$\frac{1}{2}$. That is, we should write a formula that is true at $w$ when $v(\Box d,w)>\frac{1}{2}$. We formalise $\mathsf{wal.}$ as follows: $\phi_\mathsf{wal.}\coloneqq\invol\triangle(\Box d\rightarrow\invol\Box d)$. By Definition~\ref{def:KinvG}, and Convention~\ref{conv:connectives}, one can see that $v(\phi_\mathsf{wal.},w)=1$ iff $v(\Box d\rightarrow\invol\Box d,w)<1$. This means that $v(\Box d,w)>v(\invol\Box d,w)$, i.e., $v(\Box d,w)>1-v(\Box d,w)$, whence, $v(\Box d,w)>\tfrac{1}{2}$, as required.
\end{example}

Let us now see how to formalise situations involving confusion using $\KinvG$.
\begin{example}\label{example:confusion}
Assume that Ann wants to come to Brittney's birthday party. Unfortunately, the only piece of information Ann has about the party is that it is held this weekend and lasts one evening. Thus, Ann cannot discern between the birthday party happening on Saturday ($s$) or Sunday ($\invol s$).

Formally, this means that in every accessible state, the values of $s$ and $\invol s$ must be the same. Thus, we write the following formula: $\phi_\mathsf{bd}\coloneqq\Box\triangle(s\leftrightarrow\invol s)$. It is easy to verify that $v(\phi_\mathsf{bd},w)=1$ iff $v(s,w')=\frac{1}{2}=v(\invol s,w')$ in every $w'\in R(w)$. Indeed, the value of $\triangle(s\leftrightarrow\invol s)$ always belongs to $\{0,1\}$. Thus, $v(\phi_\mathsf{bd},w)=1$ iff $v(\triangle(s\leftrightarrow\invol s),w')=1$ in every accessible state. That is, $v(s\leftrightarrow\invol s,w')=1$, and hence, $v(s,w)=v(\invol s,w)=1-v(s,w)$.
\end{example}

We finish the section with the following observations.  First, $\Box p\leftrightarrow\invol\lozenge\invol p$ is valid on~$\Ffrak$ iff $\Ffrak$ is crisp. Second, $\Box$ and $\lozenge$ are not interdefinable on the class of all frames.
\begin{restatable}{proposition}{Boxlozengeinterdefinability}\label{proposition:Boxlozengeinterdefinability1}
$\Ffrak\models_{\KinvG}\Box p\leftrightarrow\invol\lozenge\invol p$ iff $\Ffrak$ is crisp.
\end{restatable}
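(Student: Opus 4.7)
The plan is to prove both directions by a direct semantic calculation, first unfolding $v(\invol\lozenge\invol p,w)$ into a convenient form and then comparing it pointwise with $v(\Box p,w)$.

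First I would compute that, for any model and any world $w$,
\begin{align*}
v(\invol\lozenge\invol p,w)&=1-\sup_{w'\in W}\min(wRw',\,1-v(p,w'))=\inf_{w'\in W}\max(1-wRw',\,v(p,w')),
\end{align*}
so that the equivalence to be studied is pointwise reducible to comparing, for each accessible $w'$, the quantity $wRw'\rightarrow_\Gmsf v(p,w')$ with $\max(1-wRw',v(p,w'))$.

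For the $(\Leftarrow)$ direction, assume $\Ffrak$ is crisp, fix an arbitrary valuation $v$ and $w\in W$, and do the obvious case split on $wRw'\in\{0,1\}$. If $wRw'=0$, both expressions equal $1$; if $wRw'=1$, both reduce to $v(p,w')$ (using that $1\rightarrow_\Gmsf x=x$ in the G\"odel residuum and that $\max(0,x)=x$). Hence the two infima over $w'\in W$ coincide, giving $v(\Box p,w)=v(\invol\lozenge\invol p,w)$ and thus $v(\Box p\leftrightarrow\invol\lozenge\invol p,w)=1$.

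For the $(\Rightarrow)$ direction, I would argue contrapositively. Suppose $\Ffrak=\langle W,R\rangle$ is not crisp, so there exist $w,w_0\in W$ with $r\coloneqq wRw_0\in(0,1)$. Define a valuation by $v(p,w_0)=0$ and $v(p,u)=1$ for every other $u\in W$. A routine computation then gives $v(\Box p,w)=r\rightarrow_\Gmsf 0=0$ (because every summand for $u\neq w_0$ evaluates to $1$, while $u=w_0$ gives $0$), whereas $v(\invol\lozenge\invol p,w)=\max(1-r,0)=1-r>0$ (again all other $u$ contribute $1$). Consequently $v(\Box p\leftrightarrow\invol\lozenge\invol p,w)=\min(0,1-r)=0\neq 1$, so $\Ffrak\not\models_{\KinvG}\Box p\leftrightarrow\invol\lozenge\invol p$.

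There is no genuine obstacle here: the only delicate point is making sure the countermodel in the $(\Rightarrow)$ direction does not pick up spurious contributions from other successors, which is why I fix the value of $p$ to be $1$ at every world other than the distinguished $w_0$; this forces both infima to be attained at $w_0$.
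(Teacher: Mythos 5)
Your proof is correct and follows essentially the same strategy as the paper's: verify the crisp case by a direct computation showing both sides reduce to the same infimum, and refute the non-crisp case by exhibiting an explicit countermodel at a world with an intermediate accessibility value. The only difference is the choice of countermodel --- you force $v(\Box p,w)=0$ and $v(\invol\lozenge\invol p,w)=1-r>0$, whereas the paper sets $v(p,u)=wRu$ (and $1$ exactly when $wRu=1$) to get $v(\Box p,w)=1$ while $v(\invol\lozenge\invol p,w)<1$ --- but both work equally well.
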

\begin{proof}
Observe that on crisp frames, the semantics of modalities can be simplified as follows: $v(\Box\phi,w)=\inf\{v(\phi,w'):wRw'\}$ and $v(\lozenge\phi,w)=\sup\{v(\phi,w'):wRw'\}$. From here, it is evident that $\Box p\leftrightarrow\invol\lozenge\invol p$ is valid on crisp frames. For the converse, assume that $\Ffrak$ is not crisp, i.e., there are $w,w'\in\Ffrak$ s.t.\ $wRw'=x$ for some $0<x<1$. Now for every $w''\in W$, set $v(p,w'')=wRw''$ and $v(p,w'')=1$ only if $wRw''=1$. It is clear that $v(\Box p,w)=1$. On the other hand, we have that $v(\lozenge\invol p,w)>0$ since $0<v(\invol p,w')\leq1-x<1$ and $wRw'=x$. But then, $v(\invol\lozenge\invol p,w)\neq1$, as required.
\end{proof}
\begin{restatable}{proposition}{Boxlozengenoninterdefinability}\label{proposition:Boxlozengeinterdefinability2}
$\Box$ and $\lozenge$ are not interdefinable on the class of all frames. That is, there is a~model $\Mfrak$ and $w\in\Mfrak$ s.t.\ (1) there is no $\lozenge$-free formula $\chi$ s.t.\ $v(\chi,w)=v(\lozenge p,w)$; (2) there is no $\Box$-free formula $\psi$ s.t.\ $v(\psi,w)=v(\Box p,w)$.
\end{restatable}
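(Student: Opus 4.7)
The plan is to witness each non-definability claim by an explicit $\KinvG$-model. For part~(1), I would take $\Mfrak_1=\langle\{w,u\},R_1,v_1\rangle$ with $R_1(w,u)=\tfrac{1}{2}$ and all remaining accessibilities equal to $0$, set $v_1(p,u)=1$, and require $v_1(q,x)\in\{0,1\}$ for every atom $q$ and every world $x$ (in particular $v_1(p,w)=0$). Since $v_1(\lozenge p,w)=\min(\tfrac{1}{2},1)=\tfrac{1}{2}$, it suffices to show by induction on formula complexity that every $\lozenge$-free formula evaluates in $\{0,1\}$ at both worlds. The atomic case holds by construction; $\invol$ and the G\"odel propositional connectives preserve $\{0,1\}$; $\Box\phi$ at $u$ equals $1$ because $u$ has no $R_1$-successors; and at $w$ one has $v_1(\Box\phi,w)=\min(0\rightarrow_\Gmsf v_1(\phi,w),\,\tfrac{1}{2}\rightarrow_\Gmsf v_1(\phi,u))$, which lies in $\{0,1\}$ whenever $v_1(\phi,u)\in\{0,1\}$ because $\tfrac{1}{2}\rightarrow_\Gmsf 0=0$ and $\tfrac{1}{2}\rightarrow_\Gmsf 1=1$.

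For part~(2) the construction has to be subtler, because $\lozenge$-expressions at $w$ can read off successor values. I would take $\Mfrak_2=\langle\{w,u_1,u_2\},R_2,v_2\rangle$ with $R_2(w,u_i)=\tfrac{1}{2}$ for $i=1,2$ and all other accessibilities zero, set $v_2(p,u_1)=\tfrac{1}{3}$ and $v_2(p,u_2)=\tfrac{1}{4}$, and stipulate $v_2(q,u_1)=v_2(q,u_2)\in\{0,1\}$ for every atom $q\neq p$, with arbitrary $\{0,1\}$-valuations at $w$. A direct computation gives $v_2(\Box p,w)=\min(\tfrac{1}{2}\rightarrow_\Gmsf\tfrac{1}{3},\,\tfrac{1}{2}\rightarrow_\Gmsf\tfrac{1}{4})=\tfrac{1}{4}$. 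The core inductive claim is that for every formula $\psi$ the pair $(v_2(\psi,u_1),v_2(\psi,u_2))$ lies in $S=\{(0,0),(\tfrac{1}{3},\tfrac{1}{4}),(\tfrac{2}{3},\tfrac{3}{4}),(1,1)\}$. Since $S$ is a chain in the componentwise order, $\wedge_\Gmsf$, $\vee_\Gmsf$, and $\rightarrow_\Gmsf$ keep pairs from $S$ inside $S$, and $\invol$ merely permutes $S$; the atomic case holds by construction. Evaluating $v_2(\lozenge\phi,w)=\max(\min(\tfrac{1}{2},v_2(\phi,u_1)),\min(\tfrac{1}{2},v_2(\phi,u_2)))$ on the four $S$-pairs gives values only in $\{0,\tfrac{1}{3},\tfrac{1}{2}\}$, so every $\Box$-free formula at $w$ takes a value in the closure of $\{0,\tfrac{1}{3},\tfrac{1}{2},1\}$ under $\invol$ and the G\"odel connectives, which is $\{0,\tfrac{1}{3},\tfrac{1}{2},\tfrac{2}{3},1\}$ and excludes $\tfrac{1}{4}$.

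The main obstacle is part~(2), where one must block every $\Box$-free formula from producing $\tfrac{1}{4}$ even though $v_2(p,u_2)=\tfrac{1}{4}$ is in principle visible through $\lozenge$. The trick is to force the two successors to be indistinguishable except through $p$, by tying them with the same $R_2$-weight and identical non-$p$ valuations; the chain structure of $S$ then collapses $\max\circ\min(\tfrac{1}{2},\cdot)$ on the unique $S$-pair containing $\tfrac{1}{4}$ down to $\tfrac{1}{3}$, so $\tfrac{1}{4}$ never arises at $w$ through any $\Box$-free composition.
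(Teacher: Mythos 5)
Your constructions are correct, and the argument is sound; the route differs from the paper's in an instructive way. The paper uses a \emph{single} three-world model ($w$ with two successors carrying $p=\tfrac{1}{5}$ and $p=\tfrac{1}{4}$, both at accessibility degree $\tfrac{2}{3}$) and runs two inductions on value sets at the successors plus two correspondence maps between them. Your part~(2) model is the same kind of object, but your packaging of the induction as ``the pair of values at $(u_1,u_2)$ stays in the chain $S=\{(0,0),(\tfrac{1}{3},\tfrac{1}{4}),(\tfrac{2}{3},\tfrac{3}{4}),(1,1)\}$'' is cleaner than the paper's two separate value-set claims tied together by the implications \eqref{equ:Boxlozengeinterdefinability} and \eqref{equ:Boxlozengeinterdefinabilityback}; the chain property is exactly what makes componentwise $\wedge_\Gmsf$, $\vee_\Gmsf$, $\rightarrow_\Gmsf$, and $\invol$ preserve $S$. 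Your part~(1) model is genuinely simpler than the paper's: it exploits that $x\rightarrow_\Gmsf y$ is never strictly between $y$ and $1$, so a fuzzy accessibility degree of $\tfrac{1}{2}$ is invisible to $\Box$ over a Boolean valuation while $\lozenge$ truncates to $\tfrac{1}{2}$.

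One small mismatch with the statement as formalised: the proposition asks for a \emph{single} model $\Mfrak$ and world $w$ witnessing both (1) and (2), whereas you supply two models. This does not affect the non-interdefinability conclusion (each direction only needs one counterexample), and in fact your $\Mfrak_2$ already does both jobs: $v_2(\lozenge p,w)=\max(\min(\tfrac{1}{2},\tfrac{1}{3}),\min(\tfrac{1}{2},\tfrac{1}{4}))=\tfrac{1}{3}$, while $\Box$-formulas at $w$ evaluated on $S$-pairs yield only $\min(\tfrac{1}{2}\rightarrow_\Gmsf a_1,\tfrac{1}{2}\rightarrow_\Gmsf a_2)\in\{0,\tfrac{1}{4},1\}$, whose propositional closure $\{0,\tfrac{1}{4},\tfrac{3}{4},1\}$ excludes $\tfrac{1}{3}$. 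So you may discard $\Mfrak_1$ and obtain the literal statement from $\Mfrak_2$ alone. Also, state explicitly the (trivial) modal cases of the induction at $u_1,u_2$: since these worlds have no successors, $\Box\phi$ and $\lozenge\phi$ yield the pairs $(1,1)$ and $(0,0)$, which lie in $S$.
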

\begin{proof}
Consider Fig.~\ref{fig:Boxlozengenoninterdefinable} and note that $v(\Box p,w)\!=\!\tfrac{1}{5}$ and $v(\lozenge p,w)\!=\!\tfrac{1}{4}$. We show by induction on $\tau$ that
\begin{align}
\forall\tau\in\bimodalLinv:v(\tau,w')\in\left\{0,\sfrac{1}{5},\sfrac{4}{5},1\right\}\text{ and }v(\tau,w'')\in\left\{0,\sfrac{1}{4},\sfrac{3}{4},1\right\}\label{equ:ab}
\end{align}
The basis cases in~\eqref{equ:ab} hold by the construction of the model and the cases of propositional connectives are obtained by applying the induction hypothesis. Finally, as $R(w')=R(w'')=\varnothing$, for $\tau=\lozenge\sigma$, we have $v(\lozenge\sigma,w')=v(\lozenge\sigma,w'')=0$; and for $\tau=\Box\sigma$, we have that $v(\Box\sigma,w')=v(\Box\sigma,w'')=1$.

Then, we can show for every $\tau\in\bimodalLinv$ that
\begin{align}\label{equ:Boxlozengeinterdefinability}
v(\tau,w'')=0&\Rightarrow v(\tau,w')=0&v(\tau,w'')=\sfrac{1}{4}&\Rightarrow v(\tau,w')=\sfrac{1}{5}\nonumber\\
v(\tau,w'')=\sfrac{3}{4}&\Rightarrow v(\tau,w')=\sfrac{4}{5}&v(\tau,w'')=1&\Rightarrow v(\tau,w')=1
\end{align}
and
\begin{align}\label{equ:Boxlozengeinterdefinabilityback}
v(\tau,w')=0&\Rightarrow v(\tau,w'')=0&v(\tau,w')=\sfrac{1}{5}&\Rightarrow v(\tau,w'')=\sfrac{1}{4}\nonumber\\
v(\tau,w')=\sfrac{4}{5}&\Rightarrow v(\tau,w'')=\sfrac{3}{4}&v(\tau,w')=1&\Rightarrow v(\tau,w'')=1
\end{align}
For~\eqref{equ:Boxlozengeinterdefinability}, we proceed by induction. The basis case of $\tau=p$ holds by the construction of the model. If $\tau=\lozenge\sigma$, then $v(\lozenge\sigma,w')=v(\lozenge\sigma,w'')=0$. If $\tau=\Box\sigma$, then $v(\Box\sigma,w')=v(\Box\sigma,w'')=1$.

Let $\tau=\varrho\wedge\sigma$. The cases of $v(\tau,w'')\in\{0,1\}$ are straightforward. Assume that $v(\varrho\wedge\sigma,w'')=\frac{1}{4}$, then w.l.o.g.\ $v(\varrho,w'')=\frac{1}{4}$ and $v(\sigma,w'')\in\{\frac{1}{4},\frac{3}{4},1\}$. By the induction hypothesis, we have that $v(\varrho,w'')=\frac{1}{5}$ and $v(\sigma,w'')\in\{\frac{1}{5},\frac{4}{5},1\}$, whence, $v(\varrho\wedge\sigma,w')=\frac{1}{5}$. The case when $v(\varrho\wedge\sigma,w'')=\frac{3}{4}$ is tackled in the same way.

Consider $\tau=\varrho\rightarrow\sigma$. We deal only with the most instructive case of $v(\varrho\rightarrow\sigma,w'')=\frac{1}{4}$. Here, we have $v(\sigma,w'')=\frac{1}{4}$ and $v(\varrho,w'')\in\{1,\frac{3}{4}\}$. By the induction hypothesis, we have $v(\sigma,w')=\frac{1}{5}$ and $v(\varrho,w')\in\{\frac{4}{5},1\}$. Hence, $v(\varrho\rightarrow\sigma,w')=\frac{1}{5}$.

Other propositional connectives can be dealt with similarly.

For~\eqref{equ:Boxlozengeinterdefinabilityback}, the case of $\tau=p$ holds by construction; if $\tau=\Box\sigma$, then $v(\Box\sigma,w')=v(\Box\sigma,w'')=1$; if $\tau=\lozenge\sigma$, then $v(\lozenge\sigma,w')=v(\lozenge\sigma,w'')=0$. The cases of propositional connectives can be dealt with in the same way as for~\eqref{equ:Boxlozengeinterdefinability}.

We now show by induction on formulas $\chi$ and $\psi$ that (i) there is no $\Box$-free formula $\chi$ s.t.\ $v(\chi,w)\in\left\{\frac{1}{5},\frac{4}{5}\right\}$ and (ii) there is no $\lozenge$-free formula $\psi$ s.t.\ $v(\psi,w)\in\left\{\frac{1}{4},\frac{3}{4}\right\}$.

For (i), the basis case holds by the construction of the model. The cases of propositional connectives can be established by a straightforward application of the induction hypothesis. Now let $\chi=\lozenge\tau$. It is clear that $v(\lozenge\tau,w)\neq\frac{4}{5}$ since $wRw'=wRw''=\frac{2}{3}$. We check that $v(\lozenge\tau,w)\neq\frac{1}{5}$. Assume for contradiction that $v(\lozenge\tau,w)=\frac{1}{5}$. Then, $v(\tau,w')=\frac{1}{5}$ from~\eqref{equ:ab}. But using~\eqref{equ:Boxlozengeinterdefinability}, we have that $v(\tau,w'')=\frac{1}{4}$, whence $v(\lozenge\tau,w)=\frac{1}{4}$. Contradiction.

For (ii), the basis case holds by the construction of the model and the cases of propositional connectives can be proven by the application of the induction hypothesis. We show that $v(\Box\tau,w)\notin\left\{\frac{1}{4},\frac{3}{4}\right\}$. Observe that $wRw'=wRw''=\frac{2}{3}$, whence, $v(\Box\tau,w)\!\neq\!\frac{3}{4}$. We are going to check that $v(\Box\tau,w)\!\neq\!\frac{1}{4}$. The proof is similar to the one of (i) but we use~\eqref{equ:Boxlozengeinterdefinabilityback}. If $v(\Box\tau,w)=\frac{1}{4}$, then $v(\tau,w'')=\frac{1}{4}$ by~\eqref{equ:ab}. But from here, we obtain $v(\tau,w')=\frac{1}{5}$ by~\eqref{equ:Boxlozengeinterdefinabilityback}. Contradiction.
\end{proof}
\begin{figure}
\begin{align*}
\xymatrix{w'':p=\sfrac{1}{4}~&&\ar[ll]|{\sfrac{2}{3}}~w:p=0~\ar[rr]|{\sfrac{2}{3}}&&~w':p=\sfrac{1}{5}}
\end{align*}
\caption{All variables have the same values as $p$.}
\label{fig:Boxlozengenoninterdefinable}
\end{figure}
\section{Semantics for $\KinvG$ with the finite model property\label{sec:Fmodels}}
It is well-known (cf., e.g.,~\cite{CaicedoRodriguez2010}) that the standard semantics of $\KG$ (modal G\"{o}del logic) and thus, of $\KinvG$\footnote{As G\"{o}del negation is expressible in $\bimodalLinv$ (Convention~\ref{conv:connectives}), $\KinvG$ is a~conservative extension of $\KG$ with~$\invol$.}, lack the finite model property (FMP). Still, semantics over so-called $\Fmsf$-models ($\Fmsf$ stands for ‘finitary’) satisfying the finite model property can be provided to any expansion of $\KG$ with order-based connectives (i.e., connectives expressible via infima and suprema of lattice operations)~\cite{CaicedoMetcalfeRodriguezRogger2017}.

One can see, however, that $\invol$ is not an order-based connective. Thus, we cannot use the general result of~\cite{CaicedoMetcalfeRodriguezRogger2017} to provide a~semantics over $\Fmsf$-models. Still, we can adapt the approach from~\cite{CaicedoMetcalfeRodriguezRogger2013}. Namely, we will redefine the models in such a~way that in a~given state $w$, there is only a~finite set of values that formulas of the form $\Box\phi$ or $\lozenge\phi$ are allowed to have. Then modal formulas will be ‘witnessed’ in $w$ if there is an accessible state $w'$ where the value of $\phi$ is sufficiently close to the value of $\Box\phi$ or $\lozenge\phi$.
\begin{definition}[$\Fmsf$-models]\label{def:F-KinvG}
An \emph{$\Fmsf$-model} is a tuple $\Mfrak=\langle W,R,T,v\rangle$ with $\langle W,R\rangle$ being a~frame and $T:W\rightarrow\Pmc_{<\omega}([0,1])$\footnote{We use $\Pmc_{<\omega}([0,1])$ to denote the set of all \emph{finite} subsets of $[0,1]$.} be s.t.\ for all $w\in W$, it holds that: $\{0,\frac{1}{2},1\}\subseteq T(w)$, and if $x\in T(w)$, then $1-x\in T(w)$.
% \begin{itemize}[noitemsep,topsep=2pt]
% \item $\{0,\frac{1}{2},1\}\subseteq T(w)$;
% \item if $x\in T(w)$, then $1-x\in T(w)$.
% \end{itemize}
Finally, $v$ (\emph{valuation}) is a~map $v:\Prop\times W\rightarrow[0,1]$ that is extended to the complex formulas as in $\KinvG$ (Definition~\ref{def:KinvG}) in the cases of propositional connectives and in the modal cases, as follows:
\begin{align*}
v(\Box\phi,w)&=\max\{x\in T(w)\mid x\leq\inf\limits_{w'\in W}\{wRw'\rightarrow_\Gmsf v(\phi,w')\}\}\\
v(\lozenge\phi,w)&=\min\{x\in T(w)\mid x\geq\sup\limits_{w'\in W}\{wRw'\wedge_\Gmsf v(\phi,w')\}\}
\end{align*}
We say that $\phi\in\bimodalLinv$ is \emph{$\Fmsf$-valid} iff $v(\phi,w)=1$ in every $\Fmsf$-model $\Mfrak$ and every $w\in\Mfrak$.
\end{definition}

In the definition above, $T$ assigns a~finite subset of $[0,1]$ to each state in the model. This way, $T(w)$ is the set of values that modal formulas can have at $w$. Note also that the value of $\Box\phi$ in an $\Fmsf$-model can be interpreted as an approximation from below of its ‘real’ value in a~standard model. Dually, the value of $\lozenge\phi$ is an approximation from above. It is important to observe that we need the $1-x$ closure of $T(w)$ and the presence of $\frac{1}{2}$. Otherwise, we could have the following ‘models’.
\begin{align*}
\Nfrak:\xymatrix{w_0\ar^(.4){R=\sfrac{1}{2}}[r]&w_1:p=\frac{1}{2}}&&T(w_0)=\{0,1\}
&&\|\|&&\Nfrak':\xymatrix{w'_0\ar^(.4){R=1}[r]&w'_1:p=\frac{1}{3}}&&T(w'_0)=\{0,\sfrac{1}{3},\sfrac{1}{2},1\}
\end{align*}
One can see that $v(\Box p\wedge\lozenge\invol p,w_0)=1$ in~$\Nfrak$ which is impossible in $\KinvG$-models. Furthermore, $\Nfrak'$ is crisp but $v'(\Box p\leftrightarrow\invol\lozenge\invol p,w'_0)=0$. By Proposition~\ref{proposition:Boxlozengeinterdefinability1}, this is impossible crisp in $\KinvG$-models.

Let us now state the main result of the section.
\begin{restatable}{theorem}{semanticsequivalence}\label{theorem:semanticsequivalence}~
\begin{enumerate}[noitemsep,topsep=2pt]
\item $\phi$ is $\KinvG$-valid iff it is $\Fmsf$-valid on all \emph{finite models}.
\item $\phi$ is $\KinvG$-valid on \emph{crisp frames} iff it is $\Fmsf$-valid on all \emph{finite crisp models}.
\end{enumerate}
\end{restatable}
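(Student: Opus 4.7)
The plan is to prove each biconditional in parts~1 and~2 by contrapositive, constructing countermodels in each direction. Part~2 follows from the part~1 construction because neither transformation introduces $R$-values outside $\{0,1\}$ when starting from a crisp model, so I focus on part~1.

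\emph{From $\Fmsf$-countermodel to standard $\KinvG$-countermodel.} Let $\Mfrak=\langle W,R,T,v\rangle$ be a finite $\Fmsf$-model with $v(\phi,w_0)<1$. The discrepancy with the standard semantics is that each $v(\Box\psi,w)$ (resp.\ $v(\lozenge\psi,w)$) can strictly undershoot $\inf_{w'}(wRw'\rightarrow_\Gmsf v(\psi,w'))$ (resp.\ overshoot the corresponding supremum). I would convert $\Mfrak$ into a standard model $\Mfrak^\star$ by adjoining, for each $w\in W$ and each modal subformula of $\phi$ exhibiting such a gap, fresh witness successors of $w$ whose accessibility weights and propositional valuations are chosen so that the $\Fmsf$-value is realised as an actual infimum or supremum at $w$. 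One must pick the witness valuations so as not to disturb the values of other modal subformulas at $w$; this is a bookkeeping exercise, most cleanly handled by taking disjoint witness families indexed by modal subformulas and exploiting that on a fresh world with $wRu$ chosen sufficiently small, other $\Box$-infima from $w$ remain as they were and other $\lozenge$-suprema do as well. A subformula induction then yields $v^\star(\psi,w)=v(\psi,w)$ on $W$, hence $v^\star(\phi,w_0)<1$.

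\emph{From standard $\KinvG$-countermodel to finite $\Fmsf$-countermodel.} Given $\Mfrak=\langle W,R,v\rangle$ with $v(\phi,w_0)<1$, I adapt the construction of~\cite{CaicedoMetcalfeRodriguezRogger2013}: build a finite reachable $W^\dagger\subseteq W$ layer by layer from $w_0$; at each layer and for each modal subformula $\Box\psi$ or $\lozenge\psi$ of $\phi$ and each already-selected $w$, pick a single successor of $w$ in $\Mfrak$ approximating the corresponding infimum or supremum to a precision strictly smaller than the smallest nonzero gap between values assumed across $W$ by subformulas of $\phi$. For each $w\in W^\dagger$ set
\begin{align*}
T(w)\;=\;\{0,\sfrac{1}{2},1\}\;\cup\;\{v(\sigma,w),\,1-v(\sigma,w):\sigma\text{ is a subformula of }\phi\},
\end{align*}
which is finite and satisfies the closure conditions of Definition~\ref{def:F-KinvG}; restrict $R$ and $v$ to $W^\dagger$. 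A subformula induction yields $v^\dagger(\psi,w)=v(\psi,w)$ on $W^\dagger$; in the modal cases, the chosen witnesses together with $v(\Box\psi,w),v(\lozenge\psi,w)\in T(w)$ force the $\max/\min$ in the $\Fmsf$-clause to return the standard-semantics value.

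\emph{Main obstacle.} The reason one cannot directly invoke~\cite{CaicedoMetcalfeRodriguezRogger2017} is the non-order-based character of $\invol$: the naive choice $T(w)=\{v(\sigma,w):\sigma\text{ subformula of }\phi\}$ breaks as soon as $\invol$ stands above a modality, since then $v(\invol\Box\psi,w)=1-v(\Box\psi,w)$ would have to lie in $T(w)$ for the value to be correctly carried through an enclosing modality or involution. Adding the $1-x$ closure and requiring $\sfrac{1}{2}\in T(w)$ in Definition~\ref{def:F-KinvG} is precisely what threads the subformula induction through $\invol$ and through the $\invol$-defined connectives $\vee$, $\coimplies$, and $\triangle$; these closures are exactly the ones violated by the degenerate ``models'' $\Nfrak$ and $\Nfrak'$ displayed just before the theorem statement.
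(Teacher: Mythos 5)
Your second direction (standard countermodel $\Rightarrow$ finite $\Fmsf$-countermodel) is essentially the paper's Lemma~\ref{lemma:Lemma3}: finitely many witnesses per modal subformula, $T(w)$ built from the values of (involutions of) modal subformulas at $w$, closed under $1-x$ and containing $\{0,\frac{1}{2},1\}$. One repair is needed: ``the smallest nonzero gap between values assumed across $W$ by subformulas of $\phi$'' need not exist when $W$ is infinite (the values can be dense), so the precision must be taken \emph{locally}, relative to the finite grid $T(w)$ at the current node, exactly as in the paper, where the witness $y_{\Box\psi}$ is only required to satisfy $x_0Ry_{\Box\psi}\rightarrow_\Gmsf v(\psi,y_{\Box\psi})<\alpha_{i+1}$ for the next grid point $\alpha_{i+1}$ of $T(x_0)$. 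You also implicitly need the prior reduction to tree-like models of finite height (the paper's Lemma~\ref{prop:Lemma1}) for the layer-by-layer induction to terminate.

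The genuine gap is in your first direction. You propose to turn an $\Fmsf$-model into a standard model by adjoining, for each modal subformula $\Box\psi$ with $v(\Box\psi,w)=\alpha_i<\inf_{w'}(wRw'\rightarrow_\Gmsf v(\psi,w'))$, a fresh successor $u$ whose valuation is ``chosen so that the $\Fmsf$-value is realised as an actual infimum.'' But $\psi$ is an arbitrary complex formula, and you cannot prescribe its value at a fresh world: for instance if $\psi$ behaves like $\triangle\chi$, then $v(\psi,u)\in\{0,1\}$ at \emph{every} world, so no choice of propositional valuation at $u$ makes $wRu\rightarrow_\Gmsf v(\psi,u)$ equal to $\alpha_i\in(0,1)$; the only achievable values would be $0$ (which pushes the infimum below $\alpha_i$ and breaks the agreement) or $\geq wRu$. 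Your remark that ``$wRu$ chosen sufficiently small'' protects the other modal subformulas also conflicts with the requirement $wRu>\alpha_i$ needed for the targeted witness. The paper's Lemma~\ref{lemma:Lemma2} avoids this by never inventing fresh valuations: it takes \emph{countably many copies} of the already-constructed successor submodels, each transformed by an order-embedding $h_k$ that fixes the grid $T(x_0)$ pointwise and squeezes each interval $(\alpha_i,\alpha_{i+1})$ into $(\alpha_i,\alpha_i+\frac{1}{k})$. Lemma~\ref{prop:Lemma1}$(c)$ guarantees all formula values transform coherently under $h_k$, so the infimum over the infinitely many copies converges to (without necessarily attaining) $\alpha_i$. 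The resulting standard model is infinite, which is harmless since the theorem only asserts existence of \emph{some} $\KinvG$-countermodel; indeed no finite standard model can work in general, which is precisely why a limit construction of this kind is unavoidable and why a single exact witness cannot replace it.
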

We adapt the proof of~\cite[Theorem~1]{CaicedoMetcalfeRodriguezRogger2013}. We begin with some technical notions.
\begin{definition}[Generated models]\label{def:generatedmodel}~
\begin{itemize}[noitemsep,topsep=2pt]
\item A frame $\widehat{\Ffrak}=\langle\widehat{W},\widehat{R}\rangle$ is \emph{a~subframe} of $\Ffrak=\langle W,R\rangle$ if $\widehat{W}\subseteq W$ and $\widehat{R}$ is the restriction of $R$ on $\widehat{W}$.
\item $\widehat{\Mfrak}=\langle\widehat{W},\widehat{R},\widehat{v}\rangle$ is \emph{a~submodel of} $\Mfrak=\langle W,R,v\rangle$ if $\langle\widehat{W},\widehat{R}\rangle$ is a~subframe of $\langle W,R\rangle$ and $\widehat{v}$ is the restriction of $v$ on $\widehat{W}$.
\item $\widehat{\Mfrak}=\langle\widehat{W},\widehat{R},\widehat{T},\widehat{v}\rangle$ is \emph{a~submodel of} $\Mfrak=\langle W,R,T,v\rangle$ if $\langle\widehat{W},\widehat{R}\rangle$ is a~subframe of $\langle W,R\rangle$ and $\widehat{v}$ and $\widehat{T}$ are the restrictions of $v$ and $T$ on $\widehat{W}$.
\item $\widehat{\Mfrak}$ is \emph{generated by $X\subseteq W$} if it is the smallest submodel containing $X$ s.t.\ if $w\in\widehat{W}$ and $wRw'>0$, then $w'\in\widehat{W}$, as well.
\end{itemize}
\end{definition}
\begin{convention}~
\begin{itemize}[noitemsep,topsep=2pt]
\item A model $\Mfrak=\langle W,R,v\rangle$ (or $\Mfrak=\langle W,R,T,v\rangle$) is \emph{tree-like} if $\langle W,R\rangle$ is a~directed rooted tree.
\item Given a~tree-like model $\Mfrak$, $\hmc(\Mfrak)$ denotes its \emph{height}.
\end{itemize}
\end{convention}
First, we can establish the analogue of~\cite[Lemma~1]{CaicedoMetcalfeRodriguezRogger2013}. We show that to verify the validity of a~formula (with respect to $\KinvG$-models and $\Fmsf$-models), it suffices to consider only tree-like models of \emph{finite height}. The proof is standard, so we omit it here.
\begin{lemma}\label{prop:Lemma1}
Let $\Mfrak=\langle W,R,v\rangle$ and $\widehat{\Mfrak}=\langle\widehat{W},\widehat{R},\widehat{v}\rangle$ be $\KinvG$-models. Let further $\Mfrak^\Fmsf=\langle W^\Fmsf,R^\Fmsf,T,v^\Fmsf\rangle$ and $\widehat{\Mfrak^\Fmsf}=\langle\widehat{W^\Fmsf},\widehat{R^\Fmsf},\widehat{T},\widehat{v^\Fmsf}\rangle$ be $\Fmsf$-models. Then the following statements hold.
\begin{enumerate}[noitemsep,topsep=2pt]
\item[$(a)$] If $\widehat{\Mfrak}$ (resp., $\widehat{\Mfrak^\Fmsf}$) is a generated submodel of $\Mfrak$ ($\Mfrak^\Fmsf$), then $\widehat{v}(\phi,w)\!=\!v(\phi,w)$ for all $w\in\widehat{W}$ (resp., $\widehat{v^\Fmsf}(\phi,w)=v^\Fmsf(\phi,w)$ for all $w\in\widehat{W^\Fmsf}$) and $\phi\in\bimodalLinv$.
\item[$(b)$] For all $\phi\in\bimodalLinv$ and $w\in\Mfrak$ ($w\in\Mfrak^\Fmsf$), there is a tree-like submodel $\widehat{\Mfrak}$ of~$\Mfrak$ ($\widehat{\Mfrak^\Fmsf}$ of~$\Mfrak^\Fmsf$) generated by $w$ s.t.\ $v(\phi,w)=\widehat{v}(\phi,w)$ ($v^\Fmsf(\phi,w)=\widehat{v^\Fmsf}(\phi,w)$) and $\hmc$$(\widehat{\Mfrak})\leq\lmc(\phi)$.
\item[$(c)$] Let $g:[0,1]\rightarrow[0,1]$ be s.t.\ $g(0)=0$, $g(1)=1$, \underline{$g(1-x)=1-g(x)$}, and $x\leq x'$ iff $g(x)\leq g(x')$\footnote{Observe that this, in particular, means that $x>x'$ implies $g(x)>g(x')$.}. Let further $\Nfrak=\langle W_\Nfrak,R_\Nfrak,v_\Nfrak\rangle$ (resp., $\Nfrak^\Fmsf=\langle W^\Fmsf_\Nfrak,R^\Fmsf_\Nfrak,T_\Nfrak,v^\Fmsf_\Nfrak\rangle$) be s.t.\ $W_\Nfrak=W$, $wR_\Nfrak w'=g(wRw')$, and $v_\Nfrak(p,w)=g(v(p,w))$ ($W^\Fmsf_\Nfrak=W^\Fmsf$, $wR^\Fmsf_\Nfrak w'=g(wR^\Fmsf w')$, $T_\Nfrak(w)=g(T(w))$, $v^\Fmsf_\Nfrak(p,w)=g(v^\Fmsf(p,w))$) for every $w,w'\in W$ ($w,w'\in W^\Fmsf$) and $p\in\Prop$. Then it holds that $v_\Nfrak(\phi,w)=g(v(\phi,w))$ (resp., $v^\Fmsf_\Nfrak(\phi,w)=g(v^\Fmsf(\phi,w))$) for every $\phi\in\bimodalLinv$ and $w\in W$ ($w\in W^\Fmsf$). 
\end{enumerate}
\end{lemma}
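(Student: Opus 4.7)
All three parts proceed by induction on the complexity of~$\phi$, following the template of the corresponding lemma for $\KG$ in~\cite{CaicedoRodriguez2010,CaicedoMetcalfeRodriguezRogger2013}. The only genuine novelty is the involutive negation~$\invol$, but since its clause is local it slots uniformly into the three inductions.

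\textbf{Part~$(a)$.} The base case and the propositional cases (including $\invol$) hold immediately because $\widehat{v}$ agrees with $v$ on $\widehat{W}$ and these clauses consult only the valuation at the current world. For the modal cases $\phi=\Box\psi$ and $\phi=\lozenge\psi$, the clauses depend only on the successors $\{w'\mid wRw'>0\}$, which are contained in~$\widehat{W}$ because $\widehat{\Mfrak}$ is generated; in the $\Fmsf$-case, $\widehat{T}(w)=T(w)$ by definition of a~submodel, so the modal clause evaluated in $\widehat{\Mfrak^\Fmsf}$ yields the same expression as in $\Mfrak^\Fmsf$, and the induction hypothesis finishes the job.

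\textbf{Part~$(b)$.} I~would perform the standard tree unravelling $\Mfrak^\ast$ rooted at~$w$: nodes are finite $R$-chains $w=w_0R\dots Rw_n$ whose edges all have positive weight, each such node $s=(w_0,\dots,w_n)$ inherits the valuation $v(\cdot,w_n)$ (and, in the $\Fmsf$-case, the set $T(w_n)$), and the edge from $s$ to $(w_0,\dots,w_n,w')$ carries weight $w_nRw'$. A~straightforward induction, analogous to part~$(a)$, shows that $\Mfrak^\ast$ assigns the same value to every formula at the root as $\Mfrak$ does at~$w$. I~then truncate $\Mfrak^\ast$ at depth $\lmc(\phi)$; a~secondary induction on~$\phi$ confirms that this truncation preserves $v(\phi,w)$, because each nested $\Box$ or~$\lozenge$ contributes at least one symbol to~$\lmc(\phi)$, so no successor beyond depth $\lmc(\phi)$ is ever consulted in the recursive evaluation of $\phi$ at the root.

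\textbf{Part~$(c)$.} I~induct on~$\phi$. The base case is immediate from the construction of~$\Nfrak$. The case $\phi=\invol\psi$ reduces to $v_\Nfrak(\invol\psi,w)=1-g(v(\psi,w))=g(1-v(\psi,w))=g(v(\invol\psi,w))$, which is precisely where the hypothesis $g(1-x)=1-g(x)$ is used. The cases $\phi=\psi\wedge\chi$ and $\phi=\psi\rightarrow\chi$ reduce to $g(\min(a,b))=\min(g(a),g(b))$ and $g(a\rightarrow_\Gmsf b)=g(a)\rightarrow_\Gmsf g(b)$, both consequences of strict order-preservation together with $g(1)=1$. The modal cases are the main obstacle: I~need $g$ to commute with the $\inf$ and $\sup$ appearing in the modal clauses, which together with its commutation with $\wedge_\Gmsf$ and $\rightarrow_\Gmsf$ yields $v_\Nfrak(\Box\psi,w)=g(v(\Box\psi,w))$ and dually for~$\lozenge$. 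For the $\Fmsf$-case I~additionally verify that $T_\Nfrak(w)=g(T(w))$ contains $\{0,\tfrac{1}{2},1\}$ and is closed under $x\mapsto 1-x$: this uses $g(0)=0$, $g(1)=1$, $g(\tfrac{1}{2})=\tfrac{1}{2}$ (obtained by setting $x=\tfrac{1}{2}$ in $g(1-x)=1-g(x)$), and the fact that $g$ transports the $1-x$ closure of $T(w)$.
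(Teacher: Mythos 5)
Your parts $(a)$ and $(b)$ are the standard generated-submodel and unravelling-plus-truncation arguments, which is all the paper intends here (it omits the proof as ``standard'', deferring to Lemma~1 of Caicedo--Metcalfe--Rodr\'{i}guez--Rogger); the propositional and $\invol$ cases of $(c)$ are likewise fine, and the use of $g(1-x)=1-g(x)$ is exactly where it belongs.

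There is, however, a genuine gap at the point you yourself flag as ``the main obstacle'' in $(c)$: an order embedding $g$ with $g(0)=0$, $g(1)=1$ and $g(1-x)=1-g(x)$ need \emph{not} commute with infima and suprema, so the modal step for $\KinvG$-models does not follow from the listed hypotheses. Take $g(x)=x/2$ on $[0,\tfrac{1}{2})$, $g(\tfrac{1}{2})=\tfrac{1}{2}$, $g(x)=(x+1)/2$ on $(\tfrac{1}{2},1]$: all four conditions hold, yet for a model with $wRw_i=1$ and $v(p,w_i)=\tfrac{1}{2}+\tfrac{1}{i+2}$ one gets $v(\Box p,w)=\tfrac{1}{2}$ while $v_\Nfrak(\Box p,w)=\inf_i g\bigl(\tfrac{1}{2}+\tfrac{1}{i+2}\bigr)=\tfrac{3}{4}\neq g(\tfrac{1}{2})$. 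To close the gap you must either add the hypothesis that $g$ preserves the infima and suprema actually occurring in the modal clauses (which is how the lemma is used downstream: in Lemma~\ref{lemma:Lemma2} the modal values of the assembled model are recomputed by hand from the specific structure of the $h_k$'s rather than read off from $(c)$), or restrict to models where those infima and suprema are attained. Note that the $\Fmsf$-model half of $(c)$ \emph{can} be salvaged without any extra hypothesis: writing $I=\inf_{w'}\{wRw'\rightarrow_\Gmsf v^\Fmsf(\phi,w')\}$ and $I'$ for the corresponding infimum computed in $\Nfrak^\Fmsf$, one has $x\leq I\Rightarrow g(x)\leq g(I)\leq I'$, while $x>I$ yields a term $t$ with $I\leq t<x$ and hence $I'\leq g(t)<g(x)$; so the outer $\max$ over $T_\Nfrak(w)=g(T(w))$ selects exactly $g$ of the element selected over $T(w)$ even when $I'\neq g(I)$, and dually for $\lozenge$. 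Your proof should make this case split explicit rather than asserting commutation with $\inf$ and $\sup$ outright.
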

We remark quickly that because of $\invol$, we need the underlined part of $(c)$ that was not present in the original statement. Otherwise, we could have $g(\frac{1}{2})=\frac{2}{3}$ which would fail $(c)$ since $v_\Mfrak(p\leftrightarrow\invol p,w)=1$ if $v_\Mfrak(p,w)=\frac{1}{2}$ but $v_\Nfrak(p\leftrightarrow\invol p,w)=\frac{1}{3}$ since $v_\Nfrak(p,w)=\frac{2}{3}$.

The next statements are analogous to~\cite[Lemmas~2 and~3]{CaicedoMetcalfeRodriguezRogger2013}.% Note that the only difference between $\KinvG$ and $\KG$ is the presence of the involutive negation. Since it is defined locally, the proofs are similar to those in~\cite{CaicedoMetcalfeRodriguezRogger2013}.
\begin{restatable}{lemma}{Lemmatwo}\label{lemma:Lemma2}
For any tree-like $\Fmsf$-model $\Mfrak=\langle W,R,T,v\rangle$ of finite height with root $x_0$ there is a tree-like $\KinvG$-model $\widehat{\Mfrak}=\langle \widehat{W},\widehat{R},\hat{v}\rangle$ with root $\hat{x}_0$ s.t.\ $v(\phi,x_0)=\hat{v}(\phi,\hat{x}_0)$ for each $\phi\in\bimodalLinv$. In addition, if $\Mfrak$ is crisp, so is $\widehat{\Mfrak}$.
\end{restatable}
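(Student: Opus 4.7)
My plan is to argue by induction on $\hmc(\Mfrak)$. The base case $\hmc(\Mfrak)=0$ is immediate: $x_0$ is the only world, so in both semantics $v(\Box\phi,x_0)=1$ and $v(\lozenge\phi,x_0)=0$, values lying in $T(x_0)$ by hypothesis; take $\widehat{\Mfrak}=\langle\{x_0\},\varnothing,v\rangle$ (crispness is vacuous).

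For the inductive step, let $x_0$ have children $\{w_i\}_{i\in I}$ in $\Mfrak$. I would apply the IH to each subtree rooted at $w_i$ (of height strictly less than $\hmc(\Mfrak)$) to obtain tree-like $\KinvG$-models $\widehat{\Mfrak}_i$ with roots $\hat{w}_i$ such that $\hat{v}_i(\phi,\hat{w}_i)=v(\phi,w_i)$ for every $\phi\in\bimodalLinv$. Next, form a naive candidate $\widehat{\Mfrak}_0$ by attaching each $\widehat{\Mfrak}_i$ to a fresh root $\hat{x}_0$ via $\hat{x}_0\hat{R}\hat{w}_i=x_0Rw_i$ and by setting $\hat{v}(p,\hat{x}_0)=v(p,x_0)$. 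The modal values at $\hat{x}_0$ in $\widehat{\Mfrak}_0$ are then the \emph{true} infima and suprema $I_\phi=\inf_i\{x_0Rw_i\rightarrow_\Gmsf v(\phi,w_i)\}$ and $S_\phi=\sup_i\{x_0Rw_i\wedge_\Gmsf v(\phi,w_i)\}$; by Definition~\ref{def:F-KinvG} these agree with $v(\Box\phi,x_0)$ and $v(\lozenge\phi,x_0)$ exactly when $I_\phi,S_\phi\in T(x_0)$, and otherwise the naive value strictly overshoots (for $\Box$) or undershoots (for $\lozenge$) its $T(x_0)$-rounded target.

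To repair each remaining discrepancy I would add one witness child of $\hat{x}_0$ per offending modal formula. For $\Box\phi$ with $I_\phi\in(t_k,t_{k+1})$ and $v(\Box\phi,x_0)=t_k$ (consecutive elements of $T(x_0)$), pick $i^*$ so that $x_0Rw_{i^*}\rightarrow_\Gmsf v(\phi,w_{i^*})$ already lies in $(t_k,t_{k+1})$ (such $i^*$ exists by definition of infimum) and build a witness $\hat{u}^\Box_\phi$ as a rescaled copy of $\widehat{\Mfrak}_{i^*}$ via Lemma~\ref{prop:Lemma1}$(c)$, with a monotone involution-respecting bijection $g$ and a suitably chosen edge weight $\hat{x}_0\hat{R}\hat{u}^\Box_\phi$, so that $\hat{x}_0\hat{R}\hat{u}^\Box_\phi\rightarrow_\Gmsf\hat{v}(\phi,\hat{u}^\Box_\phi)=t_k$. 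A symmetric construction using $\wedge_\Gmsf$ handles $\lozenge$-formulas whose naive supremum falls strictly below their $T(x_0)$-target. The closure of $T(x_0)$ under $x\mapsto 1-x$ combined with the clause $g(1-x)=1-g(x)$ from Lemma~\ref{prop:Lemma1}$(c)$ ensures the construction is compatible with $\invol$ and treats the $\Box/\lozenge$ duals symmetrically.

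The hard part will be verifying that $g$ and the witness edge can be chosen simultaneously so that the correction for $\Box\phi$ (resp.\ $\lozenge\phi$) does not push any other $\Box\psi$-infimum above $v(\Box\psi,x_0)$ or lower any $\lozenge\psi$-supremum below $v(\lozenge\psi,x_0)$. This is the technical heart of the argument; it relies on the characteristic either-$1$-or-consequent shape of $\rightarrow_\Gmsf$, the symmetry constraint on $g$, and, if needed, on adding further witnesses to decouple conflicting modal demands. Finally, the crispness claim transfers: whenever every $x_0Rw_i\in\{0,1\}$, the witness edges may likewise be chosen in $\{0,1\}$, so $\widehat{\Mfrak}$ remains crisp.
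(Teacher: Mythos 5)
Your skeleton (induction on $\hmc(\Mfrak)$, invoking the induction hypothesis on the subtrees, attaching them to a fresh root, and then repairing the modal values at the root by adding rescaled copies of subtrees via involution-respecting order embeddings as in Lemma~\ref{prop:Lemma1}$(c)$) matches the paper's proof. But the step you yourself flag as ``the technical heart'' is a genuine gap, and it is exactly the point where your strategy diverges from the one that works. You propose to add, for each offending $\Box\phi$, a \emph{single} witness child whose implication value is \emph{exactly} $t_k$. Two problems: (i) the maps $g$ of Lemma~\ref{prop:Lemma1}$(c)$ are order embeddings, hence injective, so you cannot in general send a value lying strictly inside $(t_k,t_{k+1})$ to the endpoint $t_k$ while the embedding also fixes $0$, $1$, and commutes with $x\mapsto 1-x$ and while the edge weight stays above $t_k$ (in the crisp case the edge weight is forced to be $1$, making the collapse unavoidable); (ii) even granting such a witness for $\phi$, the same copy carries values for every other formula $\psi$, and since $g$ need not fix the other elements of $T(x_0)$, the witness's contribution $e\rightarrow_\Gmsf g(\hat{v}(\psi,\hat{u}))$ can drop below $v(\Box\psi,x_0)$ or push $\lozenge\psi$ above its target. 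Since the lemma must hold for \emph{all} $\phi\in\bimodalLinv$ (not a finite fragment --- that restriction only appears in Lemma~\ref{lemma:Lemma3}), ``adding further witnesses to decouple conflicting demands'' does not terminate into an argument.

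The missing idea is that the infimum/supremum need not be \emph{attained} --- it suffices to \emph{approach} it. The paper adds, for each child $y$ of $x_0$, \emph{countably many} copies $\widehat{\Mfrak}_y^k$ ($k\in\Nmbb$), rescaled by embeddings $h_k$ that fix every element $\alpha_i$ of $T(x_0)$ and squeeze each gap $(\alpha_i,\alpha_{i+1})$ into $(\alpha_i,\alpha_i+\sfrac{1}{k})$ for even $k$ (and toward the right endpoint for odd $k$). Because every $h_k$ fixes $T(x_0)$ pointwise and preserves which gap each value lies in, no formula's implication or conjunction value ever crosses a $T(x_0)$ boundary, so all modal formulas are handled uniformly and simultaneously; and as $k\to\infty$ the values converge to the endpoints, so the infimum over the enlarged family of children is exactly the $T(x_0)$-rounded value $\alpha_i=v(\Box\phi,x_0)$. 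This replaces your unresolved per-formula exactness argument with a single uniform construction, at the (harmless) cost of an infinite $\KinvG$-model.
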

\begin{proof}
The statement may be proven by induction on $\hmc(\Mfrak)$. The case in which $\hmc(\Mfrak)=0$ serves as basis, setting $\widehat{W}=W$, $\widehat{R}=\varnothing$, and $\widehat{v}=v$; in this case, the statement follows trivially. As induction step, suppose that $\hmc(\Mfrak)=n+1$. By the induction hypothesis, for each $y\in R(x_0)$, the submodel of $\Mfrak$ generated by $\lbrace y\rbrace$~--- the model $\Mfrak_{y}=\langle W_{y},R_{y},v_{y}\rangle$~--- there is a $\KinvG$ tree model $\widehat{\Mfrak}=\langle\widehat{W_y},\widehat{R_y},\widehat{v}_y\rangle$ such that $\hat{v}_y(\phi,\hat{y})=v_y(\phi,y)=v(\phi,y)$. %$\hmc(\Mfrak_{y})\leq n$ By induction hypothesis, then, for each such $y$ there exists an $\Fmsf$-model $\widehat{\Mfrak_{y}}$ with $y$ its root and $\langle \widehat{W}_{y},\widehat{R}_{y}\rangle\subseteq \langle W_{y},R_{y}\rangle$ such that for all $\phi\in\Sigma$, $\widehat{v}_{y}(\phi,y)=v_{y}(\phi,y)=v(\phi,y)$ for which $\lvert\widehat{W}_{y}\rvert\leq\lvert\Sigma\rvert^{n}$ and for all $x\in\widehat{W}_{y}$, $\lvert\widehat{T}_{y}(x)\rvert\leq\lvert\Sigma\rvert$. 

Take $T(x_{0})=\lbrace \alpha_{0},\ldots,\alpha_{m}\rbrace$ and for each $k\in\Nmbb$, pick an order embedding $h_{k}:[0,1]\rightarrow[0,1]$ such that $h_{k}(0)=0$, $h_{k}(1)=1$, and $h(1-j)=1-h(j)$ for all $j\in[0,1]$ such that:
\begin{align*}
h_{k}(\alpha_{i})=\alpha_{i} & \mbox{ for all }i\leq m\mbox{ and }k\in\Nmbb\\
h_{k}[(\alpha_{i},\alpha_{i+1})]=(\alpha_{i},\min(\alpha_{i}+\sfrac{1}{k},\alpha_{i+1})) & \mbox{ for all }i\leq m-1\mbox{ and even }k\in\Nmbb\\
h_{k}[(\alpha_{i},\alpha_{i+1})]=(\max(\alpha_{i}-\sfrac{1}{k},\alpha_{i+1}),\alpha_{i+1}) & \mbox{ for all }i\leq m-1\mbox{ and odd }k\in\Nmbb
\end{align*}
For each $y\in R(x_0)$ and $k\in\Nmbb$, there exists an $\Fmsf$-model $\widehat{\Mfrak}^{k}_{y}$ defined so that $\widehat{W}_{y}^{k}$ includes for each $\hat{x}_{y}\in \widehat{W}_{y}$ a~point $\hat{x}_{y}^{k}$ with root $\hat{y}^{k}$ and for all $\hat{x}_{y}$, $\hat{z}_{y}$, and $\phi\in\bimodalLinv$, $\hat{x}_{y}^{k}\widehat{R}_{y}^{k}\hat{z}_{y}^{k}=h_k(\hat{x}_{y}\widehat{R}_{y}\hat{z}_{y})$ and $\widehat{V}_{y}^{k}(\psi,\hat{x}_{y}^{k})=h_k(\widehat{v}_{y}(\psi,\hat{x}_{y}))$. We can define a $\KinvG$-tree-model $\widehat{\Mfrak}$ where
\begin{align*}
\widehat{W}&=\bigcup\limits_{y\in R(x_0)}\bigcup\limits_{k\in\Nmbb} \widehat{W}_{y}^{k}\cup\lbrace\hat{x}_{0}\rbrace\\
x\widehat{R}z&=
\begin{cases}
h_{k}(x_0Ry) & \mbox{if }x=\hat{x}_{0}\mbox{ and }z=\hat{y}^{k}\mbox{ for some }y\in R(x_0)\mbox{ and }k\in\Nmbb\\
x\widehat{R}_{y}^{k}z & \mbox{if }x,z\in\widehat{W}_{y}^{k}\mbox{ for some }y\in R(x_0)\mbox{ and }k\in\Nmbb\\
0 & \mbox{otherwise}
\end{cases}\\
\widehat{v}(p,x)&=
\begin{cases}
v(p,x_{0}) & \mbox{if }x=x_{0}\\
\widehat{v}_{y}^{k}(p,x) & \mbox{if }x\in\widehat{W}_{y}^{k}\mbox{ for some }y\in R(x_0)\mbox{ and }k\in\Nmbb\\
\end{cases}
\end{align*}
The induction hypothesis ensures that for each $\phi\in\bimodalLinv$ and $\hat{x}_{y}^{k}\in \widehat{W}$ distinct from $\hat{x}_{0}$, $\widehat{v}_{y}^{k}(\phi,\hat{x}_{y}^{k})=\widehat{v}(\phi,\hat{x}_{y}^{k})$. What remains is then to prove that $\widehat{v}(\phi,\hat{x}_{0})=\widehat{v}(\phi,x_{0})$.

This is proven by induction on $\lmc(\phi)$. In case $\phi$ is an atom, this follows by construction of $\widehat{v}$. The non-modal connectives (including involutive negation) follow standardly. We consider the case of $\phi=\Box\psi$. 

Suppose that $\phi=\Box\psi$. Then in case that $v(\Box\psi,x_{0})=1$, for all $y\in R(x_0)$, $x_0Ry\leq v(\psi,y)$, whence by Lemma~\ref{prop:Lemma1}, $v(\psi,y)=v_y(\psi,y)=\widehat{v}_y(\psi,\hat{y})$. As for any $y$ $x_0Ry\leq \widehat{v}_y(\psi,\hat{y})$, it follows that for any $k$, $\hat{x}_0 \widehat{R}\hat{y}^k\leq \widehat{v}_y^k(\psi,\hat{y}^k)$, entailing that $\widehat{v}(\Box\psi,\hat{x}_0)=1$ as well. Suppose then that $v(\Box\psi,x_{0})=\alpha_i\neq 1$. Then for all $z\in W$, $x_0Rz\rightarrow_\Gmsf v(\psi,z)\geq\alpha_i$ and by construction of the order embeddings, we get the important feature that for all $z\in\widehat{W}$, $\hat{x}_0\widehat{R}z\rightarrow_\Gmsf\hat{v}(\psi,z)\geq\alpha_i$. We consider two cases. In case there is a $y_0\in W$ for which $x_0Ry\rightarrow_\Gmsf v(\psi,y)=\alpha_i$, then $x_0Ry_0>v(\psi,y_0)=\alpha_i$ and the order embeddings ensure that for all $k\in\Nmbb$, $\hat{x}_0\widehat{R}\hat{y}_0^k>\hat{v}(\psi,\hat{y}_0^k)=\alpha_i$. Otherwise, there must be some $y_0\in W$ for which $x_0Ry_0\rightarrow_\Gmsf v(\psi,y_0)\in (\alpha_i,\alpha_{i+1})$. By construction of the order embeddings, for any $\varepsilon >0$ there is some $k\in\Nmbb$ for which $h_k(x_0Ry_0\rightarrow_\Gmsf v(\psi,y_0))=\hat{x}_0 \widehat{R}\hat{y}_0^k\rightarrow_\Gmsf\hat{v}(\psi,\hat{y}_0^k)\in(\alpha_i,\alpha_i +\varepsilon)$. In either case, $\hat{v}(\Box\psi,\hat{x}_0)=\alpha_i=v(\Box\psi,x_0)$. 

Finally, we note that if $\Mfrak$ is crisp, so is each $\widehat{\Mfrak}_{y}$ and thus so is each $\widehat{\Mfrak}_{y}^{k}$, whence $\widehat{\Mfrak}$ is crisp.
\end{proof}
\begin{definition}[Fragments]\label{def:fragments}
A~\emph{fragment} is a~set $\Sigma\subseteq\!\bimodalLinv$ s.t.\ it is closed under taking subformulas and $\{\zerobot,\onetop\}\subseteq\Sigma$. Now let $\cdot^\Imc:\bimodalLinv\rightarrow\bimodalLinv$ be as follows:
\begin{align*}
\phi^\Imc&=\begin{cases}\invol\phi\mbox{ if there is no }\xi\in\!\bimodalLinv\mbox{ such that }\phi=\invol\xi\\
\xi\mbox{ if there is a }\xi\in\!\bimodalLinv\mbox{ such that }\phi=\invol\xi\end{cases}
\end{align*}

We say that $\Sigma\subseteq\bimodalLinv$ is an \emph{involutively closed fragment (ICF)} if it is a~fragment and for every $\phi,\chi\in\Sigma$, $\circ\in\{\wedge,\rightarrow\}$, and $\heartsuit\in\{\Box,\lozenge\}$, it holds that:
\begin{itemize}
\item if $\phi\in\Sigma$, then $\phi^\Imc\in\Sigma$;
\item if $\phi\circ\chi\in\Sigma$, then $\{\phi^\Imc\circ\chi,\phi\circ\chi^\Imc,\phi^\Imc\circ\chi^\Imc\}\subseteq\Sigma$;
\item if $\heartsuit\phi\in\Sigma$, then $\heartsuit\phi^\Imc\in\Sigma$.
\end{itemize}

Given an involutively closed fragment~$\Sigma$, we define:
\begin{align*}
\Sigma_\Box&=\{\Box\phi\mid\Box\phi\!\in\!\Sigma\}&\Sigma_\lozenge&=\{\lozenge\phi\mid\lozenge\phi\!\in\!\Sigma\}&\Sigma_\Lit&=\{l\mid l\!\in\!\Sigma\text{ and }l\text{ is a~literal}\}\\
\Sigma_\Box^\invol&=\{\invol\Box\phi\mid\invol\Box\phi\!\in\!\Sigma\}&\Sigma_\lozenge^\invol&=\{\invol\lozenge\phi\mid\invol\lozenge\phi\!\in\!\Sigma\}
\end{align*}
\end{definition}
\begin{restatable}{lemma}{Lemmathree}\label{lemma:Lemma3}
Let $\Sigma$ be a~finite ICF and $\Mfrak=\langle W,R,v\rangle$ a~tree-like $\KinvG$-model of finite height with root~$x_0$. Then there is a~tree-like $\Fmsf$-model $\widehat{\Mfrak}=\langle\widehat{W},\widehat{R},\widehat{T},\widehat{v}\rangle$ with root $x_0$ s.t.\ $\langle\widehat{W},\widehat{R}\rangle\subseteq\langle W,R\rangle$, $|W|\leq|\Sigma|^{\mathcal{h}(\Mfrak)}$, $|\widehat{T}(w')|\!\leq\!|\Sigma|$, and $v(\phi,x_0)\!=\!\widehat{v}(\phi,x_0)$ for every $\phi\!\in\!\Sigma$ and $w'\!\in\!\widehat{\Mfrak}$. In addition, if $\Mfrak$ is crisp, so is~$\widehat{\Mfrak}$.
\end{restatable}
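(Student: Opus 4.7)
My plan is to proceed by induction on $\hmc(\Mfrak)$, constructing $\widehat{\Mfrak}$ as a subframe of $\Mfrak$ by keeping at each node only finitely many successors, chosen to witness the inf/sup behaviour of the modal formulas in~$\Sigma$. The base case $\hmc(\Mfrak) = 0$ is trivial: take $\widehat{W} = \{x_0\}$, $\widehat{R} = \varnothing$, $\widehat{v} = v$, and $\widehat{T}(x_0) = \{0,\sfrac{1}{2},1\}$. Since $R(x_0) = \varnothing$, both $v(\Box\phi,x_0)=1=\widehat{v}(\Box\phi,x_0)$ and $v(\lozenge\phi,x_0)=0=\widehat{v}(\lozenge\phi,x_0)$, and propositional cases follow by a routine subinduction on $\phi$.

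For the inductive step $\hmc(\Mfrak) = n+1$, I first define
\[\widehat{T}(x_0) \,=\, \{v(\psi, x_0) : \psi \in \Sigma_\Box \cup \Sigma_\lozenge\} \cup \{0,\sfrac{1}{2},1\}.\]
ICF-closure of $\Sigma$ yields $\invol\Box\phi \in \Sigma$ whenever $\Box\phi \in \Sigma$ (and dually for $\lozenge$), so $v(\invol\Box\phi,x_0) = 1 - v(\Box\phi,x_0)$ already appears among the listed values, providing closure under $x \mapsto 1-x$. Next, I select witness successors: for each $\Box\phi \in \Sigma_\Box$ with $\alpha := v(\Box\phi,x_0) < 1$, let $\beta^+$ be the least element of $\widehat{T}(x_0)$ strictly greater than $\alpha$ (existing since $1 \in \widehat{T}(x_0)$); because $\alpha = \inf_{y \in R(x_0)}\{x_0 R y \rightarrow_\Gmsf v(\phi,y)\}$, I can pick a $y_{\Box\phi} \in R(x_0)$ with $x_0 R y_{\Box\phi} \rightarrow_\Gmsf v(\phi,y_{\Box\phi}) < \beta^+$. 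Dually, for each $\lozenge\phi \in \Sigma_\lozenge$ with $v(\lozenge\phi,x_0) > 0$, take the greatest $\gamma^- \in \widehat{T}(x_0)$ strictly below this value and pick $y_{\lozenge\phi}$ with $x_0 R y_{\lozenge\phi} \wedge_\Gmsf v(\phi,y_{\lozenge\phi}) > \gamma^-$. Let $Y \subseteq R(x_0)$ collect these at most $|\Sigma|$ successors.

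For each $y \in Y$, apply the induction hypothesis (with the same ICF $\Sigma$) to the tree-like submodel of $\Mfrak$ generated by $y$ (of height $\leq n$), obtaining a finite $\Fmsf$-submodel $\widehat{\Mfrak_y}$ with $|\widehat{W_y}| \leq |\Sigma|^n$ and $\widehat{v}_y(\psi,y) = v(\psi,y)$ for all $\psi \in \Sigma$. Glue these via $\widehat{W} = \{x_0\} \cup \bigcup_{y \in Y} \widehat{W_y}$, with $\widehat{R}$ the restriction of $R$, and $\widehat{v}$, $\widehat{T}$ inherited from the $\widehat{\Mfrak_y}$ and extended at $x_0$ by $v$ on atoms and by the $\widehat{T}(x_0)$ defined above. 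The size bound $|\widehat{W}| \leq 1 + |\Sigma|\cdot|\Sigma|^n \leq |\Sigma|^{n+1}$ follows; and crispness is preserved because $\widehat{R} \subseteq R$.

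Correctness, $\widehat{v}(\phi,x_0) = v(\phi,x_0)$ for $\phi \in \Sigma$, will follow by a subinduction on $\phi$; propositional and $\invol$ cases are routine given that $\widehat{v}$ agrees with $v$ on $\Sigma$ at every $y \in Y$ by the outer IH. The key case is $\phi = \Box\psi$ with $\alpha = v(\Box\psi,x_0) < 1$: the $\widehat{W}$-infimum of $\{x_0 R y \rightarrow_\Gmsf \widehat{v}(\psi,y)\}$ is $\geq \alpha$ (as an infimum over a subset of $R(x_0)$) and $< \beta^+$ by the witness $y_{\Box\psi}$; since $\alpha$ and $\beta^+$ are consecutive in $\widehat{T}(x_0)$, the F-semantics returns exactly $\alpha$. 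The dual argument handles $\lozenge\psi$. The main obstacle I anticipate is confirming that a \emph{single} witness per modal formula is enough, which hinges on the resolution of $\widehat{T}(x_0)$ being coarse enough; this is precisely why every value $v(\Box\phi,x_0), v(\lozenge\phi,x_0)$ for $\Box\phi,\lozenge\phi \in \Sigma$ must be placed into $\widehat{T}(x_0)$, and why ICF-closure is essential for the $1-x$ requirement in Definition~\ref{def:F-KinvG}.
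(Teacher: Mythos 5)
Your overall strategy is exactly the paper's: induction on $\hmc(\Mfrak)$, a finite $\widehat{T}(x_0)$ harvested from the values at $x_0$ of the modal formulas of $\Sigma$, and witness successors chosen to beat the next element of $\widehat{T}(x_0)$. There is, however, a genuine flaw in your definition of $\widehat{T}(x_0)$. You set $\widehat{T}(x_0)=\{v(\psi,x_0):\psi\in\Sigma_\Box\cup\Sigma_\lozenge\}\cup\{0,\sfrac{1}{2},1\}$ and claim it is closed under $x\mapsto 1-x$ because ICF-closure puts $\invol\Box\phi$ into $\Sigma$. But $\invol\Box\phi$ is not an element of $\Sigma_\Box\cup\Sigma_\lozenge$, so $1-v(\Box\phi,x_0)=v(\invol\Box\phi,x_0)$ is \emph{not} among your listed values, and it need not coincide with the value of any other formula in $\Sigma_\Box\cup\Sigma_\lozenge$: on a crisp frame where $x_0$ sees two points with $v(p)=0.3$ and $v(p)=0.6$, the ICF generated by $\Box p$ yields $\{v(\Box p,x_0),v(\Box\invol p,x_0)\}=\{0.3,0.4\}$, and $0.7$ is missing. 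As written, your structure therefore violates the condition $x\in T(w)\Rightarrow 1-x\in T(w)$ of Definition~\ref{def:F-KinvG} and is not an $\Fmsf$-model at all.

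The repair is what the paper does: define $\widehat{T}(x_0)=\{v(\psi,x_0):\psi\in\Sigma_\Box\cup\Sigma_\Box^\invol\cup\Sigma_\lozenge\cup\Sigma_\lozenge^\invol\}\cup\{0,\sfrac{1}{2},1\}$, which is $1-x$-closed precisely because ICF-closure guarantees $\invol\heartsuit\phi\in\Sigma$ whenever $\heartsuit\phi\in\Sigma$. This correction propagates into your witness selection: $\beta^+$ must be the successor of $\alpha$ in the \emph{enlarged} set (possibly strictly smaller than the $\beta^+$ you computed), but a witness $y_{\Box\phi}$ with $x_0Ry_{\Box\phi}\rightarrow_\Gmsf v(\phi,y_{\Box\phi})<\beta^+$ still exists because $\alpha$ is an infimum and $\beta^+>\alpha$. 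With that change the remainder of your argument --- one witness per modal formula, the consecutiveness argument in the $\Box\psi$ case, the cardinality bounds, and preservation of crispness --- goes through as in the paper. (The paper additionally fixes witnesses $y_{\invol\Box\phi}$ and $y_{\invol\lozenge\phi}$; once $\widehat{T}(x_0)$ is correctly $1-x$-closed these add nothing, since the values of the $\invol$-prefixed modal formulas at $x_0$ are determined by those of $\Box\phi$ and $\lozenge\phi$.)
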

\begin{proof}
The statement may be proven by induction on $\hmc(\Mfrak)$. As the basis, letting $\widehat{W}=W$, $\widehat{R}=\varnothing$, $\widehat{v}=v$, and $\widehat{T}(x_{0})=\lbrace 0,\frac{1}{2},1\rbrace$ establishes the statement immediately.

As induction step, suppose that $\hmc(\Mfrak)=n+1$. For each $y\in R(x_0)$, we note that the submodel $\Mfrak_{y}$ generated by $\lbrace y\rbrace$ is a $\KinvG$ model with $\hmc(\Mfrak_{y})\leq n$ and by induction hypothesis there exists an accompanying finite $\Fmsf$-tree-model $\widehat{\Mfrak}_{y}$ with root $y$ such that for all $\phi\in\Sigma$, $\widehat{v}_{y}(\phi,y)=v_{y}(\phi,y)=v(\phi,y)$. Additionally, by induction hypothesis, $\lvert\widehat{W}_{y}\rvert\leq\lvert\Sigma\rvert^{n}$ and $\lvert\widehat{T}_{y}(x)\rvert\leq\lvert\Sigma\rvert+1$ for each $x\in\widehat{W}_{y}$. We build an appropriate finite $\Fmsf$-model by selecting a set of appropriate $y\in R(x_0)$. 

For a $\Delta\subseteq\bimodalLinv$ let $v_{x}[\Delta]=\lbrace v(\phi,x)\mid \phi\in\Delta\rbrace$; the set $v_{x_{0}}[\Sigma_\Box\cup\Sigma_\Box^\invol\cup\Sigma_\lozenge\cup\Sigma_\lozenge^\invol]\cup\lbrace 0,\frac{1}{2},1\rbrace$ is a finite set $\lbrace \alpha_1,\ldots,\alpha_m\rbrace$ closed under $1-x$ where if $i<j$ then $\alpha_i<\alpha_j$. For each $\Box\phi\in\Sigma_\Box$ such that $v(\Box\phi,x_0)=\alpha_i<1$ and $\alpha_j=1-\alpha_{i}$, fix points $y_{\Box\phi},y_{\invol\Box\phi}\in R(x_0)$ for which $x_0Ry_{\Box\phi}\rightarrow_\Gmsf v(\phi,y_{\box\phi})<\alpha_{i+1}$ and $x_0Ry_{\invol\Box\phi}\rightarrow_\Gmsf v(\phi,y_{\invol\box\phi})<\alpha_{j+1}$. Similarly, for each $\lozenge\phi\in\Sigma_\lozenge$ such that $v(\lozenge\phi,x_0)=\alpha_i>0$ and $\alpha_{j}=1-\alpha_{i}$, fix points $y_{\lozenge\phi},y_{\invol\lozenge\phi}\in R(x_0)$ such that $\min(x_0Ry_{\lozenge\phi},v(\phi,y_{\lozenge\phi}))\geq \alpha_{i-1}$ and $\min(x_0Ry_{\invol\lozenge\phi},v(\phi,y_{\invol\lozenge\phi}))\geq \alpha_{j-1}$. Then let $Y=\lbrace y_{\psi}\mid \psi\in\Sigma_\Box\cup\Sigma_\Box^\invol\cup\Sigma_\lozenge\cup\Sigma_\lozenge^\invol\rbrace$.

Now, define an $\Fmsf$ model $\widehat{\Mfrak}=\langle\widehat{W},\widehat{R},\widehat{T},\widehat{v}\rangle$ such that
\begin{align*}
\widehat{W}&=\bigcup_{y\in W}\widehat{W}_{y}\cup\lbrace x_{0}\rbrace\\
x\widehat{R}z&=
\begin{cases}
x_0Rz & \mbox{if }x=x_{0}\mbox{ and }z\in R(x_0)\\
x\widehat{R}_{y}z & \mbox{if }x,z\in\widehat{W}_{y}\mbox{ for some }y\in Y\\
0 & \mbox{otherwise}
\end{cases}\\
\widehat{T}(x)&=
\begin{cases}
v_{x_{0}}[\Sigma_\Box\cup\Sigma_\Box^\invol\cup\Sigma_\lozenge\cup\Sigma_\lozenge^\invol]\cup\lbrace 0,\frac{1}{2},1\rbrace&\mbox{if }x=x_0\\
\widehat{T}_{y}(x) & \mbox{ if }x\in\widehat{W}_{y}\mbox{ for some }y\in Y
\end{cases}\\
\widehat{v}(p,x)&=
\begin{cases}
v(p,x_{0}) & \mbox{if }x=x_{0}\\
\widehat{v}_{y}(p,x)&\mbox{if }x\in\widehat{W}_{y}\mbox{ for some }y\in Y\\
\end{cases}
\end{align*}
Importantly, note that $\widehat{R}^{+}[x_{0}]=Y$ where $Y\subseteq R(x_0)$ and that for each $y\in Y$ and $\phi\in\bimodalLinv$, $x_0\widehat{R}y=x_0Ry$ and $\hat{v}(\phi,y)=v(\phi,y)$. Now, we prove that for all $\phi$, $\hat{v}(\phi,x_0)=v(\phi,x_0)$ by induction on length of $\phi$. The atomic case follows by construction of $\hat{v}$ and the non-modal connectives (including involutive negation) are standard. We thus prove the case in which $\phi=\Box\psi$.

In case $v(\Box\psi,x_0)=1$, for each $y\in R^+[x_0]$, $x_0Ry\rightarrow_\Gmsf v(\psi,y)=1$, whence for each such $y$, $x_0Ry\leq v(\psi,y)$. As $\widehat{R}^{+}[x_0]\subseteq R(x_0)$, the induction hypothesis ensures that $x_0\widehat{R}y\rightarrow_\Gmsf\hat{v}(\psi,y)=1$ for all $y\in\widehat{R}^+[x_0]$ whence, as $1\in\widehat{T}(x_0)$, $\hat{v}(\Box\psi,x_0)=1$. In case $v(\Box\psi,x_0)=\alpha_i<1$, by construction, $\alpha_i\in\widehat{T}(x)$. Then for all $y\in\widehat{W}$, $\alpha_i\leq \inf\lbrace x_0\widehat{R}y\rightarrow_\Gmsf\hat{v}(\psi,y)\mid y\in \widehat{W}\rbrace$. But given the choice of $y_{\Box\psi}\in\widehat{W}$, $x_0\widehat{R}y_{\Box\psi}\rightarrow_\Gmsf \hat{v}(\psi,y_{\Box\psi})<\alpha_{i+1}$. Then by construction of $\widehat{T}$, $\hat{v}(\Box\psi,x_0)=\alpha_i$. The case of $\lozenge\psi$ is similar.

Note, now, that $\lvert\widehat{W}\rvert\leq\lvert Y\rvert\cdot\lvert\sigma\rvert^n\leq\lvert\Sigma\rvert^{n+1}=\lvert\Sigma^{\hmc(\Mfrak)}\rvert$ and that as $\lvert \widehat{T}(x_0)\rvert\leq\lvert\Sigma_\Box\cup\Sigma_\Box^\invol\cup\Sigma_\lozenge\cup\Sigma_\lozenge^\invol\rvert+3$, $\lvert\widehat{T}(x_0)\rvert \leq\lvert\Sigma\rvert+1$. Finally, as $\langle\widehat{W},\widehat{R}\rangle\subseteq\langle W,R\rangle$, the new model inherits crispness.
\end{proof}

Theorem~\ref{theorem:semanticsequivalence} is now immediate from Lemmas~\ref{prop:Lemma1}, \ref{lemma:Lemma2}, and~\ref{lemma:Lemma3} as for any given $\phi$ there exists a~finite ICF containing~it.

We finish the section with a~short remark. Note from the proof Lemma~\ref{lemma:Lemma3} that we could not have assumed a~‘global’~$T$ (i.e., that $T(w)=T(w')$ for every $w,w'\in\Mfrak$). Indeed, we need to form $T(w)$'s according to the values of modal formulas in the corresponding states of the standard model. These values, however, are independent and thus, cannot be simulated by one~$T$.

% \begin{proof}[Proof idea]
% To obtain the statement, we adapt the proof of~\cite[Theorem~1]{CaicedoMetcalfeRodriguezRogger2013}. We rely on the fact that to verify the validity of a~formula (with respect to $\KinvG$-models and with respect to $\Fmsf$-models), it suffices to consider only tree-like models of \emph{finite height}.

% First, we show how to build a~$\KinvG$-model from an $\Fmsf$-model while preserving the values of all formulas in the tree's root ($w_0$). We consider countably infinitely many order embeddings $h_k:[0,1]\rightarrow[0,1]$ with $h_k(0)=0$, $h_k(1-x)=1-h_k(x)$, and $h_k(1)=1$ that ‘squeeze’ the open intervals between members of $T(w_0)$ closer to either their lower or upper bounds. Then we take infinitely many copies of the original $\Fmsf$-model without $T$ obtained via these embeddings. The resulting infima and suprema will coincide with the next smaller or larger member of $T(w_0)$. Thus, the required values of the formulas at~$w_0$ in the original $\Fmsf$-model will be preserved.

% For the converse direction, we consider \emph{involution-closed fragments} (ICF) of~$\bimodalLinv$, i.e., sets of formulas closed under taking subformulas and applications of $\invol$ and containing $\zerobot$ and~$\onetop$. We then show that given any finite ICF $\Sigma$ and tree-like $\KinvG$-model~$\Mfrak$, we can remove branches from $\Mfrak$ and define $\widehat{T}$ in the finite $\Fmsf$-model~$\widehat{\Mfrak}$ obtained from~$\Mfrak$ in such a~way that the evaluations of formulas in $\Sigma$ at the roots of $\Mfrak$ and $\widehat{\Mfrak}$ coincide.
% \end{proof}

\section{Tableaux\label{sec:tableaux}}
In Section~\ref{sec:Fmodels}, we used the result of~\cite{CaicedoMetcalfeRodriguezRogger2013} to obtain that $\KinvG$ has the finite model property with respect to $\Fmsf$-models. This, however, does not give a~decision algorithm. Thus, we will construct a tableaux calculus that allows us to extract countermodels from open branches and use it to define a~decision procedure that takes polynomial space. In~\cite{Rogger2016phd}, tableaux for $\KG$ are presented but they use the fact that all its connectives are order-based which is not the case for $\bimodalLinv$. To incorporate $\invol$, we follow~\cite{Haehnle1994} and provide a~\emph{constraint tableaux calculus}. We begin by giving formal definitions of the needed notions. We will then explain them in further detail.
\begin{definition}[Structures and constraints]\label{def:constraints}
We fix countable sets $\WorldLabels=\{w,w',w_0,\ldots\}$ of \emph{state-labels} and $\Var=\{c,d,e,c',\ldots\}$ of variables, define the set of $\Tmsf$-symbols $\Tmsf=\{t_i(w)\!\mid\!w\!\in\!\WorldLabels,i\!\in\!\Nmbb\}\cup\{t^s_i(w)\mid w\in\WorldLabels,i\in\Nmbb\}\cup\{\zero,\one\}$, and let $\triangledown\in\{\leqslant,<,\geqslant,>,=\}$.

We define sets of \emph{labelled formulas} ($\LF$) and \emph{relational terms} ($\relterm$) as follows:
\begin{align*}
\LF&=\{w:\phi\mid w\in\WorldLabels,\phi\in\bimodalLinv\}&\relterm&=\{w\Rmsf w'\mid w,w'\in\WorldLabels\}
\end{align*}

The set $\valueterm$ of \emph{value terms} and $\Str$ of \emph{structures} are defined as follows:
\begin{align*}
\valueterm\ni\Tmbb&\Coloneqq c\in\Var\mid\tmbf\in\Tmsf\mid\varrho\in\relterm\mid0\mid1\mid1-\Tmbb&\Str&=\valueterm\cup\LF
\end{align*}

Finally, \emph{constraints} have the form $\Sigma\triangledown\Tmbb$ s.t.\ $\Sigma\in\Str$, $\Tmbb\in\valueterm$.
\end{definition}
\begin{definition}[$\TKinvG$ --- tableaux for $\KinvG$]\label{def:TKinvG}
A \emph{tableau} is a downward-branching tree whose nodes are constraints. Each branch can be extended by one of the rules listed below. We apply the following conventions: vertical bars denote branching; $\blacktriangledown,\triangledown\!\in\!\{\leqslant,<,\geqslant,>\}$, if $\triangledown$ is $\leqslant$, then $\blacktriangledown$ is $\geqslant$ and vice versa (likewise for $<$); $\triangleright\in\{\geqslant,>\}$, $\triangleleft\in\{\leqslant,<\}$; $c$, $w'$, $t(w)$, and $t^s(w)$ are fresh on the branch; $w\Rmsf u$ occurs on the branch.
\begin{align*}
\invol:\dfrac{w:\invol\phi\triangledown\Tmbb}{w:\phi\blacktriangledown1\!-\!\Tmbb}
&&
\wedge_\triangleright:\dfrac{w\!:\!\phi\wedge\chi\triangleright\Tmbb}{\begin{matrix}w\!:\!\phi\triangleright\Tmbb\\w\!:\!\chi\triangleright\Tmbb\end{matrix}}
&&
\wedge_\triangleleft:\dfrac{w:\phi\wedge\chi\triangleleft\Tmbb}{w\!:\!\phi\triangleleft\Tmbb\mid w\!:\!\chi\triangleleft\Tmbb}\\[.4em]
\rightarrow_\triangleright:\dfrac{w:\phi\rightarrow\chi\triangleright\Tmbb}{w:\chi\triangleright\Tmbb\left|\begin{matrix}\Tmbb\triangleleft1\\w:\chi\geqslant c\\w:\phi\leqslant c\end{matrix}\right.}
&&
\rightarrow_\leqslant:\dfrac{w:\phi\rightarrow\chi\leqslant\Tmbb}{1\leqslant\Tmbb\left|\begin{matrix}w:\chi\leqslant\Tmbb\\w:\phi\geqslant c\\w:\chi<c\end{matrix}\right.}
&&
\rightarrow_<:\dfrac{w:\phi\rightarrow\chi<\Tmbb}{\begin{matrix}w:\chi<\Tmbb\\w:\phi\geqslant c\\w:\chi<c\end{matrix}}
\end{align*}
\begin{align*}
\Box_\triangleright:\dfrac{w:\Box\phi\triangleright\Tmbb}{\begin{matrix}w\!:\!\Box\phi\!=\!\one\\1\triangleright\!\Tmbb\end{matrix}\left|\begin{matrix}w\!:\!\Box\phi\!=\!t(w)\\\Tmbb\triangleleft t(w)\\w'\!:\!\phi\!>\!w\Rmsf w'\\w'\!:\!\phi\!<\!t^s(w)\end{matrix}\right.}
&&
\Box_\leqslant:\dfrac{w:\Box\phi\leqslant\Tmbb}{1\!\leqslant\!\Tmbb\left|\begin{matrix}\Tmbb\!\geqslant\!t(w)\\w'\!:\!\phi\!>\!w\Rmsf w'\\w'\!:\!\phi\!<\!t^s(w)\end{matrix}\right.}
&&
\Box_<:\dfrac{w\!:\!\Box\phi\!<\!\Tmbb}{\begin{matrix}\Tmbb\!>\!t(w)\\w'\!:\!\phi\!>\!w\Rmsf w'\\w'\!:\!\phi\!<\!t^s(w)\end{matrix}}\\[.4em]
\lozenge_\triangleleft:\dfrac{w\!:\!\lozenge\phi\!\triangleleft\!\Tmbb}{\begin{matrix}w\!:\!\lozenge\phi\!=\!\zero\\\Tmbb\!\triangleright\!0\end{matrix}\left|\begin{matrix}w\!:\!\lozenge\phi\!=\!t^s(w)\\t^s(w)\!\triangleleft\!\Tmbb\\w\Rmsf w'\!>\!t(w)\\w'\!:\!\phi\!>\!t(w)\end{matrix}\right.}
&&
\lozenge_\geqslant:\dfrac{w\!:\!\lozenge\phi\!\geqslant\!\Tmbb}{\Tmbb\!\leqslant\!0\left|\begin{matrix}\Tmbb\!\leqslant\!t^s(w)\\w\Rmsf w'\!>\!t(w)\\w'\!:\!\phi\!>\!t(w)\end{matrix}\right.}
&&
\lozenge_>:\dfrac{w\!:\!\lozenge\phi\!>\!\Tmbb}{\begin{matrix}\Tmbb\!<\!t^s(w)\\w\Rmsf w'\!>\!t(w)\\w'\!:\!\phi\!>\!t(w)\end{matrix}}\\[.4em]
\Box_=:\dfrac{w\!:\!\Box\phi\!=\!\Tmbb}{u\!:\!\phi\!\geqslant\!\Tmbb\left|\begin{matrix}u\!:\!\phi\!<\!\Tmbb\\u\!:\!\phi\!\leqslant\!w\Rmsf u\end{matrix}\right.}
&&
\lozenge_=:\dfrac{w\!:\!\lozenge\phi\!=\!\Tmbb}{w\Rmsf u\!\leqslant\!\Tmbb\left|\begin{matrix}u\!:\!\phi\!\leqslant\!\Tmbb\\w\Rmsf u\!>\!\Tmbb\end{matrix}\right.}
\end{align*}

Let $\Bmc=\{\cmc_1,\ldots,\cmc_n\}$ be a branch with constraints $\cmc_1$, \ldots, $\cmc_n$ and let further, $\cmc^\tmc$ be the result of replacing every $\Tmbb\in\valueterm$ and $\lambda\in\LF$ with variables $x_\Tmbb$ and $x_\lambda$, respectively, and every $\nmbb\in\{\zero,\one\}$ with $n\in\{0,1\}$. Furthermore, for every $w\in\WorldLabels$ occurring on $\Bmc$, we set
\begin{align*}
\Tmsf(w)&=\{t(w)\mid t(w)\text{ is on }\Bmc\}\cup\{t^s(w)\mid t^s(w)\text{ is on }\Bmc\}\cup\{\nmbb\mid\exists\phi\ w\!:\!\phi\!=\!\nmbb\!\in\!\Bmc\}
\end{align*}
and define $\Bmc$ to be \emph{closed} iff the following system of inequalities
\begin{align*}
\{\cmc^\tmc_1,\ldots,\cmc^\tmc_n\}\cup\{x_{t(w)}<x_{t^s(w)}\mid t(w)\text{ and }t^s(w)\text{ occur on }\Bmc\}
\end{align*}
\emph{does not have} a~solution over $[0,1]$ s.t.\ for every $w\in\WorldLabels$, the following properties hold:
\begin{align}\label{equ:closure}
\Tmsf(w)\!=\!\{t(w),t^s(w)\}\Rightarrow&\left[\begin{matrix}x_{t(w)}=0~\&~x_{t^s(w)}=\frac{1}{2}&\text{or }\\x_{t(w)}=\frac{1}{2}~\&~x_{t^s(w)}=1\end{matrix}\right]\nonumber\\[.4em]
|\Tmsf(w)|\geq3\Rightarrow&
\left[\begin{matrix}
\forall\tmbf\!\in\!\Tmsf(w)\;\exists\tmbf'\!\in\!\Tmsf(w):x_\tmbf=1-x_{\tmbf'}\text{ and}\\[.4em]
\exists\tmbf_1,\tmbf_2,\tmbf_3:x_{\tmbf_1}=0~\&~x_{\tmbf_2}=\frac{1}{2}~\&~x_{\tmbf_3}=1
\end{matrix}\right]\nonumber\\[.4em]
&\neg\exists t(w),t^s(w),t'(w):x_{t(w)}<x_{t'(w)}<x_{t^s(w)}
\end{align}
% The branch is \emph{closed} if it is not open.

A~branch is \emph{open} if it is not closed. A~branch $\Bmc$ is \emph{complete} when for every premise of any rule occurring on~$\Bmc$, its conclusion also occurs in $\Bmc$. The only exceptions are branches containing constraints $w:\phi<\Tmbb$ and $w:\phi<1$ or $w:\phi>\Tmbb$ and $w:\phi>0$. In this case, the rules are applied to the constraints containing $\Tmbb$, not $0$ and~$1$.

Finally, $\phi\in\bimodalLinv$ \emph{has a~$\TKinvG$ proof} if there is a~tableau beginning with $w:\phi<1$ s.t.\ all its branches are closed.
\end{definition}
\begin{definition}[Model realising a~branch]\label{def:realisingmodel}
Let $\Mfrak=\langle W,R,T,v\rangle$ be an $\Fmsf$-model and $\Bmc$ a~tableau branch. An~\emph{$\Mfrak$-realisation of~$\Bmc$} is a~map $\real:\WorldLabels\cup\Str\rightarrow W\cup[0,1]$ s.t.\ $\real(w)\in W$ and $\real(\Sigma)\in[0,1]$ for every $\Sigma\in\Str$ and $w\in\WorldLabels$ occurring on~$\Bmc$, and the following properties hold:
\begin{enumerate}[noitemsep,topsep=2pt]
\item $\real(\zero)=0$ and $\real(\one)=1$;
\item $\real(w\Rmsf w')=\real(w)R\real(w')$;
\item if $\real(\Tmbb)=x$, then $\real(1-\Tmbb)=1-x$ for every $\Tmbb\in\valueterm$;
\item $(\{\real(\tmbf)\mid\tmbf\in\Tmsf(w)\}\cup\{0,\frac{1}{2},1\})\subseteq T(\real(w))$ and $\real(t(w))<\real(t^s(w))$ for all $w\in\WorldLabels$;
\item there are no $t'(w)$, $t(w)$, and $t^s(w)$ on~$\Bmc$ s.t.\ $\real(t(w))<\real(t'(w))<\real(t^s(w))$;
\item if $\Tmsf(w)=\{t(w),t^s(w)\}$, then $\frac{1}{2}\in\{\real(t(w)),\real(t^s(w))\}$;
\item if $|\Tmsf(w)|\geq3$, then for each $\tmbf\in\Tmsf(w)$ on $\Bmc$, there is $\tmbf'\in\Tmsf(w)$ on $\Bmc$ s.t.\ $\real(\tmbf)=1-\real(\tmbf')$, and there are $t_1,t_2,t_3\in\Tmsf(w)$ s.t.\ $\real(t_1)=0$, $\real(t_2)=\frac{1}{2}$, and $\real(t_3)=1$.
\end{enumerate}
A~constraint $w:\phi\triangledown\Tmbb$ is \emph{realised by~$\Mfrak$ under $\real$} if $v(\phi,\real(w))\triangledown\real(\Tmbb)$. A~constraint $\Tmbb\triangledown\Tmbb'$ is realised by~$\Mfrak$ under $\real$ if $\real(\Tmbb)\triangledown\real(\Tmbb')$. $\Bmc$ is realised by~$\Mfrak$ under $\real$ if $\real$ realises all constraints occurring on~$\Bmc$.
\end{definition}

Note that each application of a~modal rule to a~structure $w:\Box\phi$ or $w:\lozenge\chi$ that introduces a~witnessing state $w'$ to the branch also produces two value terms: $t(w)$ and $t^s(w)$. Each pair of such terms denotes the two consecutive elements of $T(w)$ between which $v(\phi,w')$ lies. %the interval between the value of $\Box\phi$ (resp., $\lozenge\chi$) in $w$ and of $\phi$ (resp., $\chi$) in $w'$
Note also that to guarantee the soundness of such rules, we demand (cf.~items~4 and~5 in Definition~\ref{def:realisingmodel}) that the intervals do not overlap in a~non-trivial way. I.e., we prohibit the situations such as $\real(t_0(w))<\real(t_1(w))<\real(t^s_1(w))<\real(t^s_0(w))$. On the other hand, the intervals can coincide if this does not contradict other constraints on the branch.

For $\Box$ rules, this condition guarantees that the introduced $t(w)$ is indeed the maximal member of $T(w)$ that is still \emph{less or equal to the ‘real’ value of $\Box\phi$} according to the standard semantics. Dually, for $\lozenge$ rules, the condition ensures that the introduced $t^s(w)$ is the minimal member of $T(w)$ that is \emph{greater or equal to the value of $\lozenge\phi$} according to the standard semantics.

Let us now look at how the tableau proofs work. In the next example, we provide a~failed tableau proof of $\Box p\rightarrow\invol\lozenge\invol p$ and show how to read off countermodels from complete open branches.
\begin{example}\label{example:tableauproof}
Cf.~Fig.~\ref{fig:tableauproof}. The tableau has three closed branches marked with~$\times$ and a~complete open branch marked with $\frownie$. Let us check the closure of branches. In the leftmost branch, we use the fact that $\real(t(w))\in[0,1]$. Steps \textbf{17} and~\textbf{20} directly contradict each other. In the rightmost branch, we use that the values of $t(w)$'s should be closed under $1-x$. Namely, we have that $t_0(w)>0$ (otherwise, it would contradict $c\leq t_0(w)$ and $w:\invol\lozenge\invol p<c$). Hence, the values of $t(w)$'s are ordered as follows: $t_1(w)$, $t^s_1(w)$, $t_0(w)$, $t^s_0(w)$. As $\zero$ and~$\one$ do not occur on the branch, we have either (I)~$t_1(w)=0$, $t^s_1(w)=t_0(w)=\tfrac{1}{2}$, $t^s_0(w)=1$ or (II)~$t_0(w)=0$, $t^s_0(w)=t_1(w)=\tfrac{1}{2}$, $t^s_1(w)=1$. Observe, however, that $t^s_1(w)=t_0(w)=\tfrac{1}{2}$ in (I)~would contradict \textbf{16} and \textbf{22}, and $t_0(w)=0$ in (II)~would contradict \textbf{4} and \textbf{16}.

Thus, $w':p\geqslant t_0(w)$ and $w':p<1-t_1(w)$ cannot be realised at the same time.

Let us now discuss how to construct a~realising model of the open branch~$\frownie$. We note quickly that in the general case, each complete open branch can produce uncountably many models because constraints give order relations, not precise values.\footnote{Note that this is not a~problem from the practical standpoint as systems of linear inequalities over reals often have uncountably many solutions. Yet, they are algorithmically solvable.} Thus, we will provide just one example of a~countermodel.

Observe that $t(w)<wRw'\leqslant v(p,w')<1$. As there are no other constraints on $t(w)$ and $t^s(w)$, we put $\real(t(w))=0$, $\real(t^s(w))=\frac{1}{2}$, and $\real(\one)=1$. We now choose any value between $0$ and $1$ for $v(p,w')$ and set $wRw'=v(p,w')$ as the inequality is not strict. A~countermodel can be seen in Fig.~\ref{fig:tableauproof}. Using Definition~\ref{def:F-KinvG}, we see that $v(\Box p,w)\!=\!1$ and $v(\invol\lozenge\invol p,w)\!=\!\frac{1}{2}$ which falsifies $\Box p\!\rightarrow\!\invol\lozenge\invol p$ at~$w$.
\end{example}
\begin{figure}
\centering
\resizebox{1\linewidth}{!}{\begin{forest}
smullyan tableaux
[\textbf{1.}~w:\Box p\rightarrow\invol\lozenge\invol p<1
[\textbf{2.}~w:\invol\lozenge\invol p<1~(\rightarrow_<:\textbf{1})
[\textbf{3.}~w:\Box p\geqslant c~(\rightarrow_<:\textbf{1})
[\textbf{4.}~w:\invol\lozenge\invol p<c~(\rightarrow_<:\textbf{1})
[\textbf{5.}~w:\lozenge\invol p>1-c~(\invol:\textbf{4})
[\textbf{6.}~{w:\Box p=\one}~(\Box_\triangleright:\textbf{3})[\textbf{7.}~{1\geqslant c}~(\Box_\triangleright:\textbf{3})[\textbf{8.}~1-c<t^s(w)~(\lozenge_>:\textbf{5})[\textbf{9.}~t(w)<w\Rmsf w'~(\lozenge_>:\textbf{5})[\textbf{10.}~w':\invol p>t(w)~(\lozenge_>:\textbf{5})[\textbf{11.}~{w':p<1-t(w)}~(\invol:\textbf{10})
[\textbf{12.}~{w':p\geqslant\one}~({\Box_=:\textbf{6}})[\times:{\textbf{11},\textbf{12}}]][\textbf{13.}~w':p<\one~({\Box_=:\textbf{6}})[\textbf{14.}~w\Rmsf w'\leqslant w':p~({\Box_=:\textbf{6}})[\frownie]]]]]]]]]
[\textbf{15.}~{w:\Box p=t_0(w)}~(\Box_\triangleright:\textbf{3})[\textbf{16.}~{c\leqslant t_0(w)}~(\Box_\triangleright:\textbf{3})[\textbf{17.}~{w':p<w\Rmsf w'}~(\Box_\triangleright:\textbf{3})[\textbf{18.}~w':p<t^s_0(w)~(\Box_\triangleright:\textbf{3})[\textbf{19.}~w':p<t_0(w)~({\Box_=:\textbf{15}})[\textbf{20.}~{w':p\geqslant w\Rmsf w'}~({\Box_=:\textbf{15}})[\times:{\textbf{17},\textbf{20}}]]][\textbf{21.}~{w':p\geqslant t_0(w)}~({\Box_=:\textbf{15}})[\textbf{22.}~1-c<t^s_1(w)~({\lozenge_>:\textbf{5}})[\textbf{23.}~w'':\invol p>t_1(w)~({\lozenge_>:\textbf{5}})[\textbf{24.}~w\Rmsf w''>t_1(w)~({\lozenge_>:\textbf{5}})[\times:{\textbf{4},\textbf{16},\textbf{22}}]]]]]
]]]]]]]]]
\end{forest}}
\[\frownie:\xymatrix{w\ar^{R=\sfrac{1}{2}}[rr]&&w':p=\frac{1}{2}}\quad T(w)=\{0,\sfrac{1}{2},1\}\]
\caption{A~failed $\TKinvG$ proof of $\Box p\rightarrow\invol\lozenge\invol p$ and a~model of the open branch (marked with $\frownie$).}
\label{fig:tableauproof}
\end{figure}

We are now ready to show the soundness and completeness of our tableaux calculus.
\begin{restatable}{theorem}{TKinvGcompleteness}\label{theorem:TKinvGcompleteness}
$\phi\in\bimodalLinv$ is $\KinvG$-valid iff it has a~$\TKinvG$ proof.
\end{restatable}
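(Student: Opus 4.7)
The plan is to show soundness and completeness separately, both by contrapositive, and using Theorem~\ref{theorem:semanticsequivalence} to work throughout with finite $\Fmsf$-models instead of arbitrary $\KinvG$-models.

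\textbf{Soundness.} If $\phi$ is not $\KinvG$-valid, pick a~finite $\Fmsf$-model $\Mfrak=\langle W,R,T,v\rangle$ with $v(\phi,w_0)<1$ and set $\real_0(w)=w_0$, $\real_0(\zero)=0$, $\real_0(\one)=1$; this realises the initial constraint $w:\phi<1$. The key lemma, proved by cases on the rules of Definition~\ref{def:TKinvG}, says that whenever a~branch~$\Bmc$ is realised by~$\Mfrak$ under some~$\real$, at least one of the branches obtained by applying any rule is realised under an extension $\real'$ of~$\real$. The propositional cases reduce to the semantics of $\wedge$, $\rightarrow$, and $\invol$, with the fresh variable~$c$ interpreted as any value between the bounds dictated by the $\Fmsf$-semantics. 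For the modal rules, one interprets the fresh symbols $t(w),t^s(w)$ as the consecutive pair of elements of $T(\real(w))$ bracketing the value of $\Box\phi$ or $\lozenge\phi$, picks a~witness~$w'$ in~$R(\real(w))$ whose value of~$\phi$ falls in the corresponding interval, and verifies items~4--7 of Definition~\ref{def:realisingmodel} from the standing assumptions on~$T(\real(w))$. Consequently every finished tableau starting from $w:\phi<1$ has a~realised, hence open, leaf, so $\phi$ has no $\TKinvG$ proof.

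\textbf{Completeness.} Suppose $\phi$ has no $\TKinvG$ proof; the systematic tableau for $w_0:\phi<1$ then contains a~complete open branch~$\Bmc$, from which I~extract a~refuting $\Fmsf$-model. Openness of~$\Bmc$ gives a~solution~$\sigma$ of the associated inequality system compatible with the closure conditions~\eqref{equ:closure}. Taking $W$ to be the set of state-labels occurring on~$\Bmc$, setting $wRw'=\sigma(x_{w\Rmsf w'})$ (and $0$ if no such relational term occurs), $T(w)=\{\sigma(x_\tmbf)\mid\tmbf\in\Tmsf(w)\}\cup\{0,\tfrac{1}{2},1\}$, and $v(p,w)=\sigma(x_{w:p})$ for atoms labelled on~$\Bmc$ (arbitrary otherwise) defines an $\Fmsf$-model. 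A~truth lemma, proved by induction on~$\phi$, then yields $v(\phi,w)\triangledown\sigma(x_\Tmbb)$ for every constraint $w:\phi\triangledown\Tmbb$ on~$\Bmc$; completeness of the branch, i.e.\ that every applicable rule has fired, ensures that the inductive hypothesis on direct subformulas suffices to derive each case. Instantiating at the root gives $v(\phi,w_0)<1$, contradicting the $\Fmsf$-validity of~$\phi$.

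The main obstacle is the modal step of the truth lemma. The rules $\Box_\triangleright,\Box_\leqslant,\Box_<$ and their $\lozenge$-duals bracket $v(\Box\phi,w)$ between consecutive symbols $t(w),t^s(w)\in T(w)$ and force a~witnessing world in the corresponding interval, while $\Box_=$ and $\lozenge_=$ force those brackets to be tight with respect to the infimum/supremum in Definition~\ref{def:F-KinvG}. The closure conditions~\eqref{equ:closure} are precisely what is needed to realise items~4--7 of Definition~\ref{def:realisingmodel}: they forbid non-trivially nested intervals $(\sigma(t(w)),\sigma(t^s(w)))$, enforce closure of~$T(w)$ under $x\mapsto 1-x$, and guarantee $\tfrac{1}{2}\in T(w)$. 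Without the latter two, a~value obtained from~$\sigma$ for a~modal formula could fall outside~$T(w)$; without the former, two witnesses could disagree by claiming different ‘real’ infima or suprema inside the same interval. Checking that every such configuration is correctly handled~--- and, in particular, that the $\Box_=$ and $\lozenge_=$ rules eliminate exactly the spurious solutions~--- is the technical heart of the argument.
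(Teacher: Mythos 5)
Your proposal is correct and follows essentially the same route as the paper: soundness via the lemma that a realising $\Fmsf$-model of a premise realises at least one conclusion of every rule (so a realised branch can never close), and completeness by extracting an $\Fmsf$-model and realisation from the solution of the inequality system of a complete open branch, with a truth lemma proved by induction on formulas whose critical cases are the modal rules and the closure conditions~\eqref{equ:closure}. The points you flag as the technical heart (the bracketing of $v(\Box\phi,w)$ by consecutive elements of $T(w)$, the role of $\Box_=$ and $\lozenge_=$, and the $1-x$-closure and non-nesting conditions) are exactly the ones the paper's proof elaborates.
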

\begin{proof}
The proof is standard and follows~\cite[Theorems~6.19 and~6.21]{Rogger2016phd}, so we only give a~sketch thereof. For the soundness part, we show that if a~tableau beginning with $w:\phi<1$ is closed, then $\phi$ is $\KinvG$-valid. We show that if $\Mfrak$ realises the premises of a~rule, then it should also realise at least one of its conclusions (note that branching rules have two conclusions). The soundness will follow since there is no $\Fmsf$-model~$\Mfrak$ and realisation $\real$ s.t.\ $\Mfrak$ realises a~closed branch~$\Bmc$ under~$\real$.

We consider the case of the $\Box_\leqslant$ rule as an example. Let $\Mfrak=\langle W,R,T,v\rangle$ realise $w:\Box\phi\leqslant\Tmbb$. If $\real(\Tmbb)=1$, then $1\leqslant\Tmbb$ (the left conclusion) is realised. Consider the case where $\real(\Tmbb)<1$. Then (cf.~Definition~\ref{def:F-KinvG}) $\max\{x\in T(w)\mid x\leq\inf\limits_{w'\in W}\{\real(w)Rw'\rightarrow_\Gmsf v(\phi,w')\}\}\leq\real(\Tmbb)$. This means that there is a~state $w'$ s.t.\ $v(\phi,w')<wRw'$ and there are $x,x'\in T(w)$ s.t.\ $x'$ is the immediate successor of~$x$, $v(\Box\phi,w)=x$, and $v(\phi,w')<x'$. Now setting $\real(t(w))=x$, $\real(t^s(w))=x'$, and $\real(w')=w'$, we have that $t(w)\leqslant\Tmbb$ and $w':\phi<t^s(w)$ are realised. Hence, the right conclusion is realised.

For the completeness part, we prove that for every complete open branch, there is an $\Fmsf$-model $\Mfrak$ and a~realisation $\real$ s.t.\ $\Mfrak$ realises $\Bmc$ under $\real$. Let~$\Bmc$ be a~complete open branch and consider the system of inequalities $\Bmc^\tmc$ as shown in~\eqref{equ:closure} and its solution as specified in Definition~\ref{def:TKinvG}. Now define $\Mfrak$ as follows: $W=\{w\mid w\text{ occurs in }\Bmc\}$, $wRw'=x_{w\Rmsf w'}$, $T(w)=\{x_{t(w)}\mid t(w)\text{ occurs in }\Bmc\}\cup\{x_{t^s(w)}\mid t^s(w)\text{ occurs in }\Bmc\}\cup\{0,\frac{1}{2},1\}$ and $v(p,w)=x_{w:p}$ for every $w$, $w'$, and $p\in\Prop$ occurring in~$\Bmc$. We also define $\real(w)=w$, $\real(\Tmbb)=x_\Tmbb$, and $\real(w:\chi)=x_{w:\chi}$ for every $w\in\WorldLabels$, $\Tmbb\in\valueterm$, and $w:\chi\in\LF$ occurring on~$\Bmc$. It remains to show that all constraints are realised by $\Mfrak$ under $\real$.

For all constraints of the form $\Tmbb\triangledown\Tmbb'$ that do not contain labelled formulas, we have $\real(\Tmbb)\triangledown\real(\Tmbb')$ by the construction of~$\Mfrak$ since $\Bmc$ is a~complete open branch. Let us now show that all constraints $u:\psi\triangledown\Tmbb$ on $\Bmc$ are realised. We proceed by induction on formulas. Constraints of the form $w:p\triangledown\Tmbb$ are realised by construction of~$\Mfrak$. The cases of constraints whose principal connective is $\invol$, $\wedge$, or $\rightarrow$ can be shown by simple applications of the induction hypothesis.

Let us consider the case of $w:\Box\phi\leq\Tmbb$ as an example and show that $v(\Box\phi,w)\leq\real(\Tmbb)$. Since $w:\Box\phi\leqslant\Tmbb$ is in~$\Bmc$ and $\Bmc$ is complete and open, then it also contains (a)~$1\leqslant\Tmbb$ or (b) $t(w)\leqslant\Tmbb$, $w':\phi<w\Rmsf w'$, and $w':\phi<t^s(w)$. If (a) is the case, $\Tmbb$ is a~value term (observe from Definition~\ref{def:constraints} that constraints can contain at most one labelled formula), whence $1\leqslant\Tmbb$ is realised by the construction of $\Mfrak$. Thus, $\real(\Tmbb)=1$ and $v(\Box\phi,w)\leq\real(\Tmbb)$, as required. If (b) is the case, then by the induction hypothesis $t(w)\leqslant\Tmbb$, $w':\phi<w\Rmsf w'$, and $w':\phi<t^s(w)$ are realised, whence, $v(\phi,w')<wRw'$, $x_{t(w)}\leq\real(\Tmbb)$, and $v(\phi,w')<x_{t^s(w)}$ with $\{x_{t(w)},x_{t^s(w)}\}\subseteq T(w)$ and $x_{t^s(w)}$ being the immediate successor of $x_{t(w)}$. But in this case, it is clear that $v(\Box\phi,w)=\max\{x\in T(w)\mid x\leq\inf\limits_{w'\in W}\{wRw'\rightarrow_\Gmsf v(\phi,w')\}\}\leq x_{t(w)}$, whence, $v(\Box\phi,w)\leq\real(\Tmbb)$, as required.
\end{proof}
\section{Complexity\label{sec:complexity}}
Let us now use $\TKinvG$ to obtain the $\pspace$-completeness of $\KinvG$. Our proof adapts the standard algorithm for $\mathbf{K}$ from~\cite{BlackburndeRijkeVenema2010}.
\begin{restatable}{theorem}{pspacecompleteness}\label{theorem:pspacecompleteness}
Validity of $\KinvG$ is $\pspace$-complete.
\end{restatable}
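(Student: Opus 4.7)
The plan is to establish the two directions separately. For $\pspace$-hardness, I would observe that classical propositional $\mathbf{K}$ embeds faithfully into $\KinvG$: the involutive negation $\invol$ coincides with classical negation on any $\{0,1\}$-valued valuation, and on such valuations the G\"odel connectives and the modalities agree with their classical counterparts. Hence every classical modal formula is $\mathbf{K}$-valid iff it is $\KinvG$-valid, and $\pspace$-hardness is inherited from $\mathbf{K}$~\cite{BlackburndeRijkeVenema2010}.

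For $\pspace$-membership, I would adapt the standard depth-first traversal for $\mathbf{K}$-tableaux, using $\TKinvG$ in place of the classical calculus and relying on the bound on tree height provided by Lemmas~\ref{prop:Lemma1}(b) and~\ref{lemma:Lemma3}. Starting from the root constraint $w:\phi<1$, the procedure processes worlds one at a time: at the current world $u$ it saturates all propositional rules (those operating on $\invol$, $\wedge$, and $\rightarrow$), non-deterministically selecting a disjunct whenever a branching rule applies. This yields a polynomial-size local constraint system at $u$ together with a list of modal demands of the form $u:\Box\psi\triangledown\Tmbb$ and $u:\lozenge\psi\triangledown\Tmbb$. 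After verifying feasibility of the local system, the procedure handles the modal demands sequentially: for each it creates a fresh successor $u'$, pushes the relevant constraints (the new $t(u)$ and $t^s(u)$ terms, the accessibility value $u\Rmsf u'$, and the propagated formula at $u'$) onto the stack, and recurses on $u'$. When the recursive call returns, the memory used for that subtree is reclaimed before proceeding to the next modal demand.

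Three observations justify the space bound. First, by Lemmas~\ref{prop:Lemma1}(b) and~\ref{lemma:Lemma3}, we may assume the target $\Fmsf$-countermodel is tree-like of height at most $\lmc(\phi)$, so the recursion depth is $O(|\phi|)$. Second, at each level only polynomially many subformulas, $t$-terms, and relational terms are involved, and a parent need pass only finitely many imported values to its child, keeping per-level storage polynomial. Third, feasibility of the accumulated system of linear inequalities at each level can be decided in polynomial time and hence in $\pspace$. Combining these observations yields an $\mathsf{NPSPACE}$ procedure for satisfiability of the negation of $\phi$ (equivalently, for failing to prove $\phi$), and Savitch's theorem converts it into a deterministic $\pspace$ algorithm.

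The main obstacle I anticipate is handling the $T$-closure constraints from~\eqref{equ:closure}: at each world, the set $\Tmsf(u)$ of introduced $t$-terms must be closed under $x\mapsto 1-x$, must include the values $0$, $\tfrac{1}{2}$, and $1$, and must contain no $t'(u)$ strictly between $t(u)$ and $t^s(u)$ for any pair generated by a modal rule. The non-betweenness condition is disjunctive and the $1{-}x$ closure couples variables in a non-purely-linear fashion. I would dispatch this by noting that the number of $t$-terms at any world is polynomial in $|\phi|$, so one can non-deterministically guess both a pairing witnessing the $1{-}x$ closure and a total order on $\Tmsf(u)$ compatible with the non-betweenness clauses. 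Each guess has polynomial size, and once fixed the residual problem is a pure linear feasibility question solvable in $\mathbf{P}$, so the overall procedure remains within $\mathsf{NPSPACE} = \pspace$.
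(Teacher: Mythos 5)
Your upper-bound argument is essentially the paper's: a depth-first traversal of the $\TKinvG$ tableau with recursion depth bounded by the modal depth of $\phi$, polynomial storage per level, reclamation of memory on return, and a (nondeterministic) polynomial-time feasibility check for the accumulated linear constraints together with the closure conditions~\eqref{equ:closure}; your explicit guessing of the $1{-}x$ pairing and of a total order on $\Tmsf(u)$ is a reasonable way to make that check precise. One bookkeeping point you gloss over: the rules $\Box_=$ and $\lozenge_=$ must be instantiated at \emph{every} successor of $u$ for \emph{every} constraint $u:\Box\chi=t_i(u)$ and $u:\lozenge\chi=t^s_i(u)$ on the branch, not just for the single modal demand that triggered the creation of a given successor $u'$. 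This is why the paper generates all children of a node before descending into any of them; your phrase ``the propagated formula at $u'$'' suggests you push only one formula to $u'$, which would be unsound. The fix is harmless for the space bound (when you create $u'$ you copy all of $u$'s $=$-constraints into it), but it needs to be said.

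The genuine gap is in your hardness argument. The claim that ``every classical modal formula is $\mathbf{K}$-valid iff it is $\KinvG$-valid'' under the identity translation is false in the direction you need for the reduction. Restricting to $\{0,1\}$-valued models only shows that $\KinvG$-validity implies $\mathbf{K}$-validity; the converse fails because a $\mathbf{K}$-valid formula can be refuted by an intermediate truth value. For instance, $p\vee\invol p$ is classically valid but takes value $\tfrac{1}{2}$ when $v(p,w)=\tfrac{1}{2}$, and even the negation-free Peirce law $((p\rightarrow q)\rightarrow p)\rightarrow p$ fails in G\"{o}del semantics at $v(p,w)=\tfrac{1}{2}$, $v(q,w)=0$. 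So classical $\mathbf{K}$ does not embed into $\KinvG$ via the identity, and a faithful reduction from $\mathbf{K}$ would require a nontrivial translation (e.g., guarding atoms with $\triangle$ or $\neg\neg$ and arguing that the translation commutes with the modalities). The paper sidesteps this entirely: it cites the known $\pspace$-hardness of $\KG$~\cite[Theorem~21]{CaicedoMetcalfeRodriguezRogger2017} and uses the fact that $\KinvG$ is a conservative extension of $\KG$ (G\"{o}del negation being definable in $\bimodalLinv$), so that any $\KG$-instance is already a $\KinvG$-instance with the same answer. You should do the same, or else supply the corrected translation.
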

% \begin{proof}[Proof idea]
% The hardness is immediate since $\KG$ is $\pspace$-hard and $\KinvG$ is a~conservative extension of $\KG$. For the membership, we adapt the algorithm for $\mathbf{K}$. The main difference is that we need to first generate all children of a~given state and then pick one of them and apply the rules there. This is because constraints $w:\Box\phi=\Tmbb$ and $w:\lozenge\chi=\Tmbb'$ that use states present on the branch can appear \emph{only after} applications of the modal rules that introduce new states. Fig.~\ref{fig:algorithm} illustrates the work of the algorithm.
% \end{proof}
\begin{proof}
$\pspace$-hardness is immediate since $\KG$ is $\pspace$-hard~\cite[Theorem~21]{CaicedoMetcalfeRodriguezRogger2017} and $\KinvG$ is a~conservative extension of $\KG$. Let us now tackle the upper bound.

First of all, we observe that all tableaux terminate. Indeed, the branching factor of rules is at most~$2$, and every rule except for $\Box_\triangleright$ and $\lozenge_\triangleleft$ reduces the number of connectives in the labelled formula in the premise. If $\Box_\triangleright$ or $\lozenge_\triangleleft$ is applied, then $w:\Box\phi=t(w)$ or $w:\lozenge\phi=t^s(w)$, respectively, is introduced to which one can apply $\Box_=$ or $\lozenge_=$ which does decompose the formula. Thus, at some point, all formulas will be decomposed into atoms. We note, moreover, that given a~branch~$\Bmc$, it takes us nondeterministic polynomial time to find a~solution to $\Bmc^\tmc$ over $[0,1]$ s.t.~\eqref{equ:closure} holds with respect to~it.

Let us now provide a~decision procedure that uses only polynomial space with respect to~$\lmc(\phi)$. We aim to build the model realising $w^0_1:\phi<1$ ‘on the fly’ (in which case, $\phi$ is not valid) or to show that it is impossible. Fig.~\ref{fig:algorithm} illustrates the process of building the model. As one sees, we \emph{do not} build model branch-by-branch but rather generate all children of a given node, then pick one child, and again, generate all its children. This is because we need to apply rules $\Box_=$ and $\lozenge_=$ using generated states \emph{all at once}. But these rules can be applied \emph{only after} $\Box_\triangleright$ and $\lozenge_\triangleleft$.

We begin with $w^0_1:\phi<1$. In what follows, if a~rule introduces branching to the tableau, we pick one branch, say, $\Bmc$, and work with it depth-first. If the branch with which we are working is closed, we delete it and choose the next one. First, we apply propositional rules. Then we apply modal rules for constraints containing $\leqslant$ or $<$. This adds new states $w^1_{1,1}$, \ldots, $w^1_{n^0_1,1}$ (we will call them \emph{children} of~$w^0_1$), new members of $\Tmsf(w^0_1)$ of the form $t_i(w^0_1)$, $t^s_i(w^0_1)$, $\zero$, and $\one$, relational terms $w^0_1\Rmsf w^1_{i,1}$, and new constraints of the form $\Box\chi=t_i(w^0_1)$ or $\lozenge\chi=t^s_i(w^0_1)$ for $i\in\{1,\ldots,n^0_1\}$. Note that $n^0_1=\Omc(\lmc(\phi))$, whence, we need only $\Omc(\lmc(\phi))$ space to store the ‘content’ of $w^0_1$. %It is also important to observe that $\Box_=$ and $\lozenge_=$ use the states generated by the applications of other modal rules. As we need to apply all instances of $\Box_=$ and $\lozenge_=$ \emph{at once}, we need to first apply all rules $\Box_\triangleright$, $\Box_\leqslant$, $\Box_<$, $\lozenge_\triangleleft$, $\lozenge_\geqslant$, and $\lozenge_>$ to~$w^0_1$. 
% Note also that we avoid generating new states until all modal rules are applied in $w^0_1$.
% we do not generate children of a~given state one by one because we can apply rules $\Box_=$ and $\lozenge_=$ that use the states on the branch \emph{only after} $\Box_\triangleright$ and $\lozenge_\triangleleft$ were applied.

If $\Bmc$ is still open, mark $w^1_{1,1}$ as ‘active’ and apply all rules for constraints of the form $\Box\chi=t_i(w^0_1)$ and $\lozenge\chi=t^s_i(w^0_1)$ using $w^1_{1,1}$ in the conclusion. Note that $\Box_=$ and $\lozenge_=$ are the only modal rules that can become applicable after the application of $\Box_\triangleright$ and $\lozenge_\triangleleft$. Once all $\Box_=$ and $\lozenge_=$ rules are applied, we apply the propositional rules to formulas labelled with $w^1_{1,1}$. We then apply modal rules for constraints with $\leqslant$ and~$<$. As on the previous stage, we generate new states $w^2_{1,1}$, \ldots, $w^2_{n^1_2,1}$, constraints of the form $\Box\chi=t_i(w^1_{1,1})$ or $\lozenge\chi=t^s_i(w^1_{1,1})$, and members of $\Tmsf(w^1_{1,1})$ and relational terms corresponding to them. We repeat the procedure until we produce a~state $w^m_{1,1}$ s.t.\ all constraints $w^m_{1,1}:\chi\triangledown\Tmbb$ are atomic. If the branch is not closed, we mark $w^m_{1,1}$ as ‘safe’, delete all $w^m_{1,1}:\chi\triangledown\Tmbb$'s and go to $w^m_{2,1}$ and repeat the procedure. Once all states $w^m_{1,1}$, \ldots, $w^m_{n^1_m,1}$ are marked as ‘safe’, we delete constraints containing them, mark $w^{m-1}_{1,1}$ as ‘safe’, and delete all its children. Then we proceed to $w^{m-1}_{2,1}$. The procedure is repeated until either all tableau branches are closed (which means that $\phi$ is valid) or $w^0_1$ is marked as ‘safe’ (and thus, we have a~complete open branch, whence, $\phi$ is not valid).

Note that the length of the branch of the constructed model is bounded from above by the modal depth of $\phi$ and is thus $\Omc(\lmc(\phi))$. In addition, each state of the branch of the model can have at most $\Omc(\lmc(\phi))$ children because their number is bounded from above by the number of modalities on the same level of nesting. Thus, we have to store $\Omc((\lmc(\phi))^2)$ states each of which contains constraints taking up to $\Omc(\lmc(\phi))$ space. Thus, our decision procedure utilises $\Omc((\lmc(\phi))^3)$ space.
\end{proof}
\begin{figure}
\centering
\resizebox{.9\linewidth}{!}{
\begin{tikzpicture}[>=stealth,relative]
% Stage 0
\node (w01initial) at (-1,-.5) {\textcolor{red}{$w^0_1$}};
% Stage 1
\node (w01stage1) at (2,0) {$w^0_1$};
\node (w111stage1) at (1,-1) {\textcolor{red}{$w^1_{1,1}$}};
\node (w121stage1) at (2,-1) {$w^1_{2,1}$};
\node at (3,-1) {$\ldots$};
\node (w1n01stage1) at (4,-1) {$w^1_{n^0_1,1}$};
\path[->,draw] (w01stage1) to (w111stage1);
\path[->,draw] (w01stage1) to (w121stage1);
\path[->,draw] (w01stage1) to (w1n01stage1);
% Stage 2
\node (initial) at (-.75,-.5) {};
\node (stage1to) at (.75,-.5) {};
\draw[double,->,draw=blue] (initial) to (stage1to);
\node (stage1from) at (4.25,-.5) {};
\node (w01stage2) at (7,0) {$w^0_1$};
\node (w111stage2) at (6,-1) {$w^1_{1,1}$};
\node (w121stage2) at (7,-1) {$w^1_{2,1}$};
\node at (8,-1) {$\ldots$};
\node (w1n01stage2) at (9,-1) {$w^1_{n^0_1,1}$};
\node (w211stage2) at (5,-2) {\textcolor{red}{$w^2_{1,1}$}};
\node (w221stage2) at (6,-2) {$w^2_{2,1}$};
\node at (7,-2) {$\ldots$};
\node (w2n111stage2) at (8,-2) {$w^2_{n^1_2,1}$};
\path[->,draw] (w01stage2) to (w111stage2);
\path[->,draw] (w01stage2) to (w121stage2);
\path[->,draw] (w01stage2) to (w1n01stage2);
\path[->,draw] (w111stage2) to (w211stage2);
\path[->,draw] (w111stage2) to (w221stage2);
\path[->,draw] (w111stage2) to (w2n111stage2);
\node (stage2to) at (6,-.5) {};
\draw[double,->,draw=blue] (stage1from) to (stage2to);
% Stage 3
\node (w01stage3) at (12,0) {$w^0_1$};
\node (w111stage3) at (11,-1) {$w^1_{1,1}$};
\node (w121stage3) at (12,-1) {$w^1_{2,1}$};
\node at (13,-1) {$\ldots$};
\node (w1n01stage3) at (14,-1) {$w^1_{n^0_1,1}$};
\node (w211stage3) at (10,-2) {$w^2_{1,1}$};
\node (w221stage3) at (11,-2) {$w^2_{2,1}$};
\node at (12,-2) {$\ldots$};
\node (w2n111stage3) at (13,-2) {$w^2_{n^1_2,1}$};
\path[->,draw] (w01stage3) to (w111stage3);
\path[->,draw] (w01stage3) to (w121stage3);
\path[->,draw] (w01stage3) to (w1n01stage3);
\path[->,draw] (w111stage3) to (w211stage3);
\path[->,draw] (w111stage3) to (w221stage3);
\path[->,draw] (w111stage3) to (w2n111stage3);
\node at (10,-2.4) {$\vdots$};
\node (wpenultimatestage3) at (10,-3) {$w^{m-1}_{1,1}$};
\node (wm11stage3) at (10,-4) {\textcolor{green}{$w^m_{1,1}$}};
\node (wm21stage3) at (11,-4) {$w^m_{2,1}$};
\node at (12,-4) {\ldots};
\node (wmnm1stage3) at (13,-4) {$w^m_{n^1_m,1}$};
\path[->,draw] (wpenultimatestage3) to (wm11stage3);
\path[->,draw] (wpenultimatestage3) to (wm21stage3);
\path[->,draw] (wpenultimatestage3) to (wmnm1stage3);
\node (stage2from) at (8.5,-.5) {};
\node (stage3to) at (11,-.5) {};
\draw[double,->,dashed,draw=blue] (stage2from) to (stage3to);
% Stage 4
\node (w01stage4) at (-1,-5) {$w^0_1$};
\node (w111stage4) at (-2,-6) {$w^1_{1,1}$};
\node (w121stage4) at (-1,-6) {$w^1_{2,1}$};
\node at (0,-6) {$\ldots$};
\node (w1n01stage4) at (1,-6) {$w^1_{n^0_1,1}$};
\node (w211stage4) at (-3,-7) {$w^2_{1,1}$};
\node (w221stage4) at (-2,-7) {$w^2_{2,1}$};
\node at (-1,-7) {$\ldots$};
\node (w2n111stage4) at (0,-7) {$w^2_{n^1_2,1}$};
\path[->,draw] (w01stage4) to (w111stage4);
\path[->,draw] (w01stage4) to (w121stage4);
\path[->,draw] (w01stage4) to (w1n01stage4);
\path[->,draw] (w111stage4) to (w211stage4);
\path[->,draw] (w111stage4) to (w221stage4);
\path[->,draw] (w111stage4) to (w2n111stage4);
\node at (-3,-7.4) {$\vdots$};
\node (wpenultimatestage4) at (-3,-8) {$w^{m-1}_{1,1}$};
\node (wm11stage4) at (-3,-9) {\textcolor{green}{$w^m_{2,1}$}};
\node (wm21stage4) at (-2,-9) {\textcolor{red}{$w^m_{2,1}$}};
\node at (-1,-9) {\ldots};
\node (wmnm1stage4) at (0,-9) {$w^m_{n^1_m,1}$};
\path[->,draw] (wpenultimatestage4) to (wm11stage4);
\path[->,draw] (wpenultimatestage4) to (wm21stage4);
\path[->,draw] (wpenultimatestage4) to (wmnm1stage4);
\node (stage3from) at (9.5,-3.5) {};
\node (stage4to) at (-0.5,-4.5) {};
\draw[double,->,draw=blue] (stage3from) to (stage4to);
% Stage 5
\node(stage4from) at (0,-5) {};
\node(stage5to) at (3.5,-5) {};
\draw[double,->,dashed,draw=blue] (stage4from) to (stage5to);
\node (w01stage5) at (4,-5) {$w^0_1$};
\node (w111stage5) at (3,-6) {$w^1_{1,1}$};
\node (w121stage5) at (4,-6) {$w^1_{2,1}$};
\node at (5,-6) {$\ldots$};
\node (w1n01stage5) at (6,-6) {$w^1_{n^0_1,1}$};
\node (w211stage5) at (2,-7) {$w^2_{1,1}$};
\node (w221stage5) at (3,-7) {$w^2_{2,1}$};
\node at (4,-7) {$\ldots$};
\node (w2n111stage5) at (5,-7) {$w^2_{n^1_2,1}$};
\path[->,draw] (w01stage5) to (w111stage5);
\path[->,draw] (w01stage5) to (w121stage5);
\path[->,draw] (w01stage5) to (w1n01stage5);
\path[->,draw] (w111stage5) to (w211stage5);
\path[->,draw] (w111stage5) to (w221stage5);
\path[->,draw] (w111stage5) to (w2n111stage5);
\node at (2,-7.4) {$\vdots$};
\node (wpenultimatestage5) at (2,-8) {$w^{m-1}_{1,1}$};
\node (wm11stage5) at (2,-9) {\textcolor{green}{$w^m_{1,1}$}};
\node (wm21stage5) at (3,-9) {\textcolor{green}{$w^m_{2,1}$}};
\node at (4,-9) {\ldots};
\node (wmnm1stage5) at (5,-9) {\textcolor{green}{$w^m_{n^1_m,1}$}};
\path[->,draw] (wpenultimatestage5) to (wm11stage5);
\path[->,draw] (wpenultimatestage5) to (wm21stage5);
\path[->,draw] (wpenultimatestage5) to (wmnm1stage5);
%Stage 6
\node (w01stage6) at (9,-5) {$w^0_1$};
\node (w111stage6) at (8,-6) {$w^1_{1,1}$};
\node (w121stage6) at (9,-6) {$w^1_{2,1}$};
\node at (10,-6) {$\ldots$};
\node (w1n01stage6) at (11,-6) {$w^1_{n^0_1,1}$};
\node (w211stage6) at (7,-7) {$w^2_{1,1}$};
\node (w221stage6) at (8,-7) {$w^2_{2,1}$};
\node at (9,-7) {$\ldots$};
\node (w2n111stage6) at (10,-7) {$w^2_{n^1_2,1}$};
\path[->,draw] (w01stage6) to (w111stage6);
\path[->,draw] (w01stage6) to (w121stage6);
\path[->,draw] (w01stage6) to (w1n01stage6);
\path[->,draw] (w111stage6) to (w211stage6);
\path[->,draw] (w111stage6) to (w221stage6);
\path[->,draw] (w111stage6) to (w2n111stage6);
\node at (7,-7.4) {$\vdots$};
\node (wpenultimatestage4) at (7,-8) {\textcolor{green}{$w^{m-1}_{1,1}$}};
\node (stage5from) at (4.5,-5) {};
\node (stage6to) at (8.5,-5) {};
\draw[double,->,draw=blue] (stage5from) to (stage6to);
\end{tikzpicture}}
\caption{Building the model on the fly: \textcolor{red}{red} states are active; \textcolor{green}{green} states are safe.}
\label{fig:algorithm}
\end{figure}
\section{Conclusion\label{sec:conclusion}}
We presented an expansion of the G\"{o}del modal logic with the involutive negation and showed how one can use it to formalise reasoning about uncertainty. We also constructed an alternative semantics for $\KinvG$ following~\cite{CaicedoMetcalfeRodriguezRogger2013,CaicedoMetcalfeRodriguezRogger2017} with respect to which $\KinvG$ has the finite model property. We then used these semantics to provide the tableaux calculus $\TKinvG$ that allows us to read off finite countermodels from complete open branches. Finally, we proved $\pspace$-completeness of $\KinvG$-validity using $\TKinvG$.

Our next steps are as follows. First, we want to continue the systematic study of $\KinvG$. In particular, we plan to provide a~complete Hilbert-style axiomatisation of $\KinvG$ over crisp and fuzzy frames. This will help investigate the correspondence between the classes of crisp and fuzzy frames on the one side and the $\bimodalLinv$-formulas that define these classes of frames, on the other. Furthermore, in~\cite{MetcalfeOlivetti2009,MetcalfeOlivetti2011}, hypersequent calculi for $\Box$- and $\lozenge$-fragments of $\KG$ are presented. Since these calculi are cut-free, they can serve as a~syntactical decision procedure (i.e., the one that does not directly produce a~countermodel) for the fragments of~$\KG$. It thus makes sense to construct hypersequent cut-free calculi for the bi-modal $\KG$ and its expansions and obtain an alternative, syntactical, decision procedure for~$\KG$.

Second, in~\cite{BilkovaFrittellaKozhemiachenko2022IJCAR,BilkovaFrittellaKozhemiachenko2023IGPL,BilkovaFrittellaKozhemiachenko2024JLC}, we considered $\KGsquare$ --- an expansion of $\KG$ with a~so-called ‘strong negation’ from~\cite{Wansing2008} with semantics defined on $[0,1]^{\Join}$ --- a~bi-lattice product of $[0,1]$ with itself. In particular, it was shown that a~class of frames is $\KGsquare$-definable iff it is $\KG$-definable by a~self-dual formula (i.e., a~formula equivalent to the negation of the result of replacing every connective with its dual: $\wedge$ with $\vee$, $\rightarrow$ with~$\coimplies$, and $\Box$ with $\lozenge$). Note, however, that $\KGsquare$ does not introduce connectives defined w.r.t.\ the second order on~$[0,1]^{\Join}$. On the other hand, in~\cite{Speranski2022}, it is shown that the modal logic on $\{0,1\}^{\Join}$ in the language with all bi-lattice connectives and modalities and the bi-modal classical modal logic have the same expressivity. It would be thus instructive to consider a~full bi-lattice expansion of $\KGsquare$ with semantics on $[0,1]^{\Join}$ and compare its expressivity to $\KG$ and its expansions. 
\bibliographystyle{eptcs}
\bibliography{generic}
\end{document}